\newtheorem{theorem}{Theorem}
\newtheorem{lemma}{Lemma}
\newtheorem{defi}{Definition}
\newcommand{\ud}{\mathrm{d}}
\begin{document}
\title{Abrupt decorrelation for linear stochastic differential equations}

\maketitle

\begin{center}
{\large Sergio I. L\'opez,}\footnote{Departamento de Matem\'aticas, Facultad de Ciencias, UNAM, Av. Universidad 3000, Circuito Exterior s/n, Coyoac\'an,  C.P. 04510, Ciudad Universitaria, CDMX, Mexico.  E-mail: silo@ciencias.unam.mx }
{\large Juan C. Pardo}\footnote{CIMAT A.C. Calle Valenciana s/n, C.P. 36240, Guanajuato, Gto., Mexico. E-mail: jcpardo@cimat.mx}{\large
 \mbox{} and Leandro P. R. Pimentel}\footnote{Instituto de Matem\'atica, Universidade Federal do Rio de Janeiro, C.P. 68530, CEP 21941-909, Rio de Janeiro, RJ, Brasil. E-mail: leandro@im.ufrj.br}
\end{center}
\vspace{0.2in}

\begin{abstract} 
Understanding how  stochastic systems progressively loses memory of their initial state is a fundamental problem in probability and statistics. In this manuscript, we introduce the notion of {\it abrupt decorrelation}, which explicitly characterises a sharp and sudden loss of correlation over time. We study this phenomenon within a class of linear stochastic differential equations (LSDEs), for which explicit characterisations can be obtained under several statistical distances. Our analysis focuses primarily on the multivariate Ornstein-Uhlenbeck process, while in the one-dimensional setting we further extend the study to LSDEs with time-dependent drift coefficients. The results reveal strong analogies with the cut-off phenomenon for Markov processes and provide new insight into the mechanisms governing decorrelation in stochastic systems. 

\noindent {\sc Key words and phrases}: Abrupt decorrelation, Linear stochastic differential equations; Ornstein-Uhlenbeck processes; Gaussian distribution; Total variation distance; Wasserstein distance; Kullback-Liebler divergence; Cut-off phenomenon.\\

\noindent MSC 2020 subject classifications:  37A25, 60G10, 60G15, 60J60
\end{abstract}

\vspace{0.5cm}

\section{Introduction and main results.} 
Understanding how a stochastic system progressively loses  memory of its initial condition is a fundamental problem in probability and statistics. The rate and mechanism by which correlations decay over time play a central role in the analysis of stochastic models and arise naturally across a broad range of applications, including statistical physics, biology, finance, information theory, and the social sciences.

Beyond the mere existence of decorrelation, an equally important question concerns the time scales on which it occurs. Characteristic decorrelation times are essential for assessing the reliability of statistical inference procedures, the efficiency of simulation algorithms, and the validity of asymptotic approximations. More generally, the study of decorrelation provides a bridge between abstract probabilistic theory and practical applications by  offering quantitative tools for understanding the behaviour of complex random systems.

 A representative example appears in information theory through the additive white Gaussian noise (AWGN) channel, one of the fundamental models for communication systems, particularly in satellite and deep-space communications. In this setting, a signal is transmitted through successive stages while being affected by Gaussian noise and subject to input constraints. As the transmission evolves, information about the original message is progressively degraded, leading naturally to a weakening of the dependence between the current state of the system and the initial signal. Determining the time scale at which this dependence becomes negligible is therefore a problem of both theoretical and practical significance; see, for instance, Polyanskiy and Wu \cite{PolWu}.

Similar questions arise in statistics in the context of estimating stationary distributions from dependent observations. Even when stationarity holds only approximately, it is frequently adopted as a working assumption. Recent work by Lecestre \cite{Lec} demonstrates that the $\rho$-estimators introduced by  Baraud and collaborators in \cite{BBS,BB} remain remarkably robust under moderate dependence. In the setting of stationary hidden Markov models and Langevin diffusions, Lecestre showed that the information coefficient, measured through the Kullback-Leibler divergence between the joint distribution of a sample and the product of its marginals, decays exponentially fast. This exponential loss of dependence provides theoretical support for the effectiveness of $\rho$-estimators beyond the independent setting and again highlights the importance of identifying the relevant decorrelation time scales.

Related phenomena also appear in random matrix theory and interacting particle systems. A classical example is provided by the Gaussian Unitary Ensemble (GUE), whose eigenvalues form a strongly correlated random system. When one compares the spectra of a large matrix and those of its principal minors, correlations persist at nearby scales but gradually weaken as the size difference increases. In appropriate asymptotic regimes, one observes a transition between strongly correlated and essentially independent behaviour. Such results illustrate how decorrelation may emerge in high-dimensional stochastic systems through changes in scale, and they provide another perspective on the quantitative study of memory loss in complex random structures \cite{Ferr,Bao,Forr}.

A canonical example from statistical physics is provided by the motion of a microscopic particle suspended in a fluid and subjected simultaneously to friction and thermal fluctuations. The velocity of such a particle is classically modelled by the Langevin equation, whose dynamics are governed by an Ornstein-Uhlenbeck process. In this framework, the deterministic component represents viscous damping, while the stochastic forcing accounts for random molecular collisions. From a physical perspective, the Ornstein-Uhlenbeck process describes the relaxation of the system toward equilibrium under the persistent influence of randomness. In equilibrium, the process is stationary and its autocorrelation function decays exponentially fast, reflecting the gradual loss of memory induced by damping and noise. This behaviour is usually interpreted as a smooth relaxation mechanism, where correlations decrease continuously at a rate determined by the dissipation parameter.

However, a different picture may emerge in high-dimensional or many-body systems. When one considers collections of interacting or independent Ornstein-Uhlenbeck components, such as velocity fields, fluctuating modes in a medium, or effective degrees of freedom in coarse-grained models, the aggregate behaviour can exhibit a markedly sharper transition. In such situations, correlations may remain essentially unchanged over a substantial period and then decay abruptly within a comparatively short time window. Although each individual mode relaxes exponentially, the superposition of many modes with different relaxation rates can generate a threshold effect at the macroscopic level: before the threshold, sufficiently slow modes preserve the memory of the initial state, whereas beyond it their combined contribution rapidly becomes negligible.

The purpose of this manuscript is to introduce the notion of {\it abrupt decorrelation}, which formalises this sharp transition in the loss of correlation over time. We investigate this phenomenon within a class of linear stochastic differential equations (LSDEs), for which explicit calculations can be carried out under several statistical distances, thereby providing a transparent framework for analysis. In particular, we show that the unique strong solutions of these equations may exhibit a sudden loss of correlation, measured in an appropriate statistical distance, in a manner closely analogous to the well-known cut-off phenomenon for Markov processes, see for instance \cite{AlDi}.
\\

Our primary focus is on the multivariate Ornstein-Uhlenbeck (OU) process in $\mathbb{R}^d$, for $d\ge 1$. In the one-dimensional case, we extend our analysis to a broader class of LSDEs with time-dependent drift terms, a family that includes the OU process as a special case. Notably, abrupt decorrelation is not exclusive to LSDEs; similar behaviour has been observed in other randomly perturbed dynamical systems. For instance, the authors in \cite{BLPP1} investigate abrupt decorrelation, exclusively under the  total variation distance, for Langevin diffusions subjected to small random perturbations. 

Before formally introducing the concept of {\it abrupt decorrelation} and presenting our main results, we first introduce the random processes that underpins our analysis. Let $d, n\ge 1$ and consider a deterministic matrix $Q\in \mathbb{R}^{d\times d}$. We  assume that all eigenvalues of  $Q$  have strictly negative real part (i.e. Hurtwitz-stable matrix), ensuring that the  deterministic system 
\begin{equation}\label{difeq}
{\rm d}x_t=Qx_t {\rm d}t
\end{equation}
with initial condition $x_0=\xi\in \mathbb{R}^d$, admits a unique absolutely continuous solution defined for  all $ t\ge 0$. Moreover, this solution satisfies
\[
\lim_{t\to\infty} x_t=0,
\]
so  the origin  is an asymptotically stable equilibrium point. Closely related  is the matrix differential equation
\[
{\rm d}\Phi(t)=Q\Phi(t){\rm d}t, \qquad \Phi(0)={\bf I}_d,
\]
where ${\bf I}_d$ denotes the $d\times d$ identity matrix.  This equation also admits a unique, absolutely continuous solution $\Phi(t)$, defined for all $t\ge 0$. The matrix-valued function $\Phi$ is called the  {\it fundamental solution} to the deterministic equation   and  is non-singular for every $t\ge0$.

Building upon this deterministic framework, we  introduce the stochastic setting. Let $(\Omega, \mathcal{F}, \mathbb{P})$ be a probability space supporting an $n$-dimensional  Brownian motion $B=(B_t, t\ge 0)$ and a deterministic matrix $\sigma \in \mathbb{R}^{d\times n}$.  Let  $\xi$ be a  Gaussian random variable in $\mathbb{R}^d$ with mean $\mu_0\in \mathbb{R}^{d}$ and covariance matrix $\Sigma_0\in \mathbb{R}^{d\times d}$, independent of $B$. We consider the family of stochastic processes $(X^{(\epsilon)}; \epsilon>0)$ governed by the following stochastic differential equation (SDE):
\begin{equation}\label{SDE}
\ud X^{(\epsilon)}_t =QX^{(\epsilon)}_t \ud t +\epsilon \sigma \ud B_t\qquad \textrm{ for any }\quad t>0,
\end{equation}
with initial condition $X_0^{(\epsilon)}=\xi$. The  process $X^{(\epsilon)}=(X^{(\epsilon)}_t; t\ge 0)$ is known as a multivariate Ornstein-Uhlenbeck (OU) process parameterised by the noise intensity $\epsilon>0$.

Under our assumptions, the SDE admits a unique strong solution adapted to the natural filtration $(\mathcal{F}_t)_{t\ge 0}$ generated by $B$, satisfiyng  the usual conditions.  Furthermore, this solution  has the explicit representation:
\begin{equation}\label{specific}
X_{t}^{(\epsilon)}=e^{tQ}\left(X_0^{(\epsilon)} +\epsilon\int_0^t e^{-sQ}\sigma {\rm d} B_s\right) \qquad \textrm{for}\quad t\ge 0.
\end{equation}
The  process $X^{(\epsilon)}$ is  Gaussian  with mean
\[
m(t)=\mathbb{E}\Big[X_{t}^{(\epsilon)}\Big]=e^{tQ}\mu_0
\]
and covariance matrix, for $s, t\ge 0$, given by
\begin{equation}\label{covmatrix}
\begin{split}
	\rho(s,t)&=\mathbb{E}\left[\left(X^{(\epsilon)}_s-e^{sQ}\mu_0\right)\left(X_t^{(\epsilon)}-e^{tQ}\mu_0\right)^T\right]\\
	&=e^{sQ}\left(\Sigma_0+\epsilon^2\int_0^{s\land t}e^{-uQ}\sigma\sigma^T e^{-uQ^T}{\rm d}u\right)e^{tQ^T},
\end{split}
\end{equation}
where  $A^T$ denotes the transpose of a matrix $A$. We also denote $\rho_t:=\rho(t,t)$ for the variance (covariance matrix) of $X_t$.

To guarantee that the covariance matrix $\rho_t$ is non-singular for all $t>0$, we require the  controllability rank condition
\begin{equation}\label{rankcond}
\mathrm{Rank}\Big[\sigma, Q\sigma, Q^2\sigma, \ldots, Q^{d-1}\sigma\Big]=d,
\end{equation}
where $\Big[\sigma, Q\sigma, Q^2\sigma, \ldots, Q^{d-1}\sigma\Big]$ is the $d\times d n$ matrix formed by the horizontal concatenation of the indicated matrices. This matrix corresponds to the linear map 
\[
(u_0, \ldots, u_{d-1})\mapsto \sigma u_0+\cdots +Q^{d-1}\sigma u_{n-1}
\]
from $\mathbb{R}^{dn}$ into $\mathbb{R^d}.$ The controllability rank condition is  necessary and sufficient for the law of $X^{(\epsilon)}_t$ to admit  a smooth density in $\mathbb{R}^d$ with respect to the Lebesgue measure, see for instance the monographs of Da Prato and Zabczyk \cite{DPZAB} and Pavliotis \cite{Pav}. Moreover, under our conditions, i.e. that $Q$ is Hurtwitz-stable and that the controllability rank condition \eqref{rankcond} is fulfilled, $X^{(\epsilon)}$ converges in distribution, as $t$ grows to infinity, to a stationary (invariant) law with smooth density. The stationary distribution  is  also a centered Gaussian in $\mathbb{R}^d$ with covariance matrix \[
\rho_\infty:=\epsilon^2\int_0^{\infty}e^{uQ}\sigma\sigma^T e^{uQ^T}{\rm d}u\in \mathbb{R}^{d\times d}.
\]
In the one-dimensional setting, the situation becomes much simpler, allowing us to extend our study to a slight generalisation of the OU process, given by the following LSDE 
\eqref{SDE}:
\begin{equation}\label{LSDE}
\ud X^{(\epsilon)}_t=-(\theta -A(t)) X^{(\epsilon)}_t\ud t+\epsilon \sigma \ud B_t, 
\end{equation}
with $\theta, \sigma>0$ constants, $A:[0,\infty)\to\mathbb{R}$ a continuous function satisfying $A(t)\to 0$, as $t\to \infty$, and  $B=(B_t; t\ge 0)$ a standard Brownian motion. The initial condition $X^{(\epsilon)}_0=\xi$ is a  Gaussian random variable, independent of $B$,   with mean $\mu_0\in \mathbb{R}$ and variance $\sigma_0^2>0$. 

For our analysis, we  assume that the function $A:[0,\infty)\to \mathbb{R}$ satisfies
\begin{equation}\label{hyp1}
\int_0^\infty|A(t)|\ud t <\infty.
\end{equation}
Although this condition is not strictly necessary for the existence of a limiting distribution, since $\theta>0$ and the properties of  $A(t)$  already guarantee this, it is essential for the well-posedness and convergence of the decorrelation profile, which  plays a central role in our results.

In this setting, the fundamental solution of the  linear drift component is explicitly given by
\begin{equation}\label{eq:linear1}
\Phi(t)=e^{-\theta t+\int_0^t A(s)\ud s}, \qquad t\ge0,	
\end{equation}

which, by the above  assumption, is asymptotically equivalent to $\alpha e^{-\theta t}$, as $t\to \infty$ where
\[
\alpha:=e^{-\int_0^\infty A(s)\ud s}>0,
\]
which appears in the limiting behaviour of the covariance structure and the decorrelation profile. For further details we refer  the reader to Section 5.6 of Karatzas and Shreve \cite{KS91}, Section 5 of Da Prato and Zabczyk \cite{DPZAB} and Section 3.7 in Plaviotis \cite{Pav}.\\

Let $\mathcal{L}(Y)$ denote the law (or distribution) of a random variable $Y$. Our main result establishes that, as the noise intensity  $\epsilon$ tends to zero, the statistical dependence between the initial state $X_0^{(\epsilon)}$ and the state at time $t$, $X_t^{(\epsilon)}$, undergoes a sharp transition. Specifically, for an appropriate statistical distance that might depend on the parameter $\epsilon$, $d^{(\epsilon)}$, the difference between the joint  law  $\mathcal{L}(X_0^{(\epsilon)}, X_t^{(\epsilon)})$ and the product law $\mathcal{L}(X_0^{(\epsilon)})\otimes\mathcal{L}(X_t^{(\epsilon)})$,  sharply drops from its maximum  value to near zero within a narrow time window centred around a characteristic {\it decorrelation  time}.  Formally, this manifests as
\[
d^{(\epsilon)}\left( \mathcal{L}(X_0^{(\epsilon)}, X_t^{(\epsilon)}), \mathcal{L}(X_0^{(\epsilon)})\otimes\mathcal{L}(X_t^{(\epsilon)})\right)
\]
transitioning rapidly from high to low values as $t$ crosses a critical scale. In other words, prior to this window, the two states are strongly statistically dependent; shortly after, the dependence drops abruptly to near zero, and beyond this window, the convergence toward statistical independence occurs at an exponential rate. This phenomenon is what we refer to as  {\it abrupt decorrelation}. Furthermore, we show that the rescaled distance between the joint and product laws converges to a universal limiting curve as $\epsilon \to 0$. This limiting behaviour defines what we call the {\it decorrelation profile}, a canonical, parameter-free description of the transition from dependence to independence in the small-noise regime.

\subsection{Abrupt decorrelation.}

The concept of {\it abrupt decorrelation}  is inspired  by  the notion of  {\it cut-off } considered by  Barrera \&  Ycart  \cite{BY} which is expressed at three increasingly sharp levels and is defined for a parameterised  family of ergodic stochastic processes, similarly to the case discussed above. Following this approach, we introduce the notion of the {\it phenomenon of abrupt decorrelation} in three progressively refined levels. 

The most stringent concept is termed {\it decorrelation profile } which 
captures the exact asymptotic behaviour  of the collapse in the distance between the joint law of the system together with is initial condition and its  independent product law.
Decorrelation profile  implies a less restrictive notion known as {\it window decorrelation} where abrupt convergence occurs within a specific time interval, though the precise asymptotic profile is no longer captured. Finally, window decorrelation is generalised into the broader notion of {\it abrupt decorrelation},  which preserves the property of sharp convergence over time centred around a critical interval, but without quantifying the error.

More precisely let us consider $(X^{(\epsilon)}; \epsilon>0)$ a parametrised family of stochastic processes with  initial condition $X^{(\epsilon)}_0=\xi$. We denote by $\mathcal{L}_t^{(\epsilon)}$ for the law of the couple $(X_0^{(\epsilon)}, X_t^{(\epsilon)})$.   Since we are interested at times at which the system  $X^{(\epsilon)}$ may decorrelate from its initial condition, we introduce a r.v. $Y$ which is independent of $X^{(\epsilon)}$ but with the same distribution as $\xi$. We denote the law of the couple $(Y, X_t^{(\epsilon)})$ by $\mathcal{I}_t^{(\epsilon)}:=\mathcal{L}(Y)\otimes\mathcal{L}(X_t^{(\epsilon)})$. 

Let us introduce $d^{(\epsilon)}(t)$, the statistical distance between the distributions $\mathcal{L}_t^{(\epsilon)}$ and $\mathcal{I}_t^{(\epsilon)}$. Let us denoted by $M$ the diameter of the respective metric space of probability measures in which we are working in. In general, $M$ could be infinite.

\begin{defi}\label{abdeco}
We say that the family $(X^{(\epsilon)}; \epsilon>0):$ 
\begin{itemize}
	\item[i)] is {\bf abrupt decorrelated} at $(t_{\epsilon}; \epsilon>0)$  if $t_{\epsilon}$ goes to $+\infty$ when $\epsilon$ goes to $0$ and
	\begin{eqnarray*}
		\lim\limits_{\epsilon\rightarrow 0 }{d^{(\epsilon)}(ct_{\epsilon})}= \left\{ \begin{array}{lcc}
			M &  \textrm{ if }  & 0 < c < 1, \\
			\\ 0 & \textrm{ if } & c>1, \\
		\end{array}
		\right.
	\end{eqnarray*}
	\item[ii)] has a {\bf window decorrelation} at
	$\{\left(t_{\epsilon}, w_{\epsilon}\right); \epsilon>0\}$, if $t_{\epsilon}$ goes to $+\infty$ when $\epsilon$ goes to $0$, $w_{\epsilon}=o\left(t_{\epsilon}\right)$ and
	\[
	\lim\limits_{r \rightarrow -\infty}{\liminf\limits_{\epsilon\rightarrow 0}
		{d^{(\epsilon)}(t_{\epsilon}+rw_{\epsilon})}}=M\qquad \textrm{and} \qquad  \lim\limits_{r \rightarrow +\infty}{\limsup\limits_{\epsilon\rightarrow 0}
		{d^{(\epsilon)}(t_{\epsilon}+rw_{\epsilon})}}=0,
	\]
	\item[iii)] has {\bf decorrelation profile}  at
	$\{\left(t_{\epsilon}, w_{\epsilon}\right); \epsilon>0\}$ with profile $G$, if $t_{\epsilon}$ goes to $+\infty$ when $\epsilon$ goes to $0$, $w_{\epsilon}=o\left(t_{\epsilon}\right)$,
	\begin{eqnarray*}
		G(r):=\lim\limits_{\epsilon \rightarrow 0}{d^{(\epsilon)}(t_{\epsilon}+rw_{\epsilon})}
	\end{eqnarray*} exists for all $r\in \mathbb{R}$ and
	\[
	\lim\limits_{r \rightarrow -\infty}{G(r)}=M\qquad \textrm{and} \qquad \lim\limits_{r \rightarrow +\infty}{G(r)}=0.
	\]
\end{itemize}
\end{defi}

Although the notions of cut-off and abrupt decorrelation may initially appear similar, they in fact capture fundamentally different phenomena. In our setting (OU processes), the cut-off phenomenon refers to the sharp convergence  of the system's law towards that of its invariant distribution which is somehow described in terms of the initial state and the asymptotic behaviour of $e^{Qt}$, see for instance \cite{BJ, BJ1}. In contrast, abrupt decorrelation pertains to the convergence of the covariance structure of the initial condition and its current state, specifically involving the rapid decay of correlations over a critical time window. These two notions operate on different statistical levels, i.e. mean versus second-order structure, and, as we will detail below, their interplay reveals rich dynamical behaviour that cannot be captured by one concept alone. This comparison between matrices introduces a higher level of analytical complexity, making the computations more involved.

\subsection{Statistical distances}
We next introduce the three statistical distances that will be used throughout this manuscript. Let $\mathbb{P}$ and $\mathbb{Q}$ be probability measures  defined in the same measurable space $\left(\Omega,\mathcal{F}\right)$.

The {\it total variation distance} between $\mathbb{P}$ and $\mathbb{Q}$ is defined by 
$$||\mathbb{P}-\mathbb{Q}||_{TV}:=\sup\limits_{A\in \mathcal{F}}{|\mathbb{P}(A)-\mathbb{Q}(A)|}.$$ 

Next, we recall the {\it  Kullback-Liebler divergence} or {\it relative entropy}.   If $\mathbb{P}$ is absolutely continuous with respect to $\mathbb{Q}$, we define
\[
D_{KL}(\mathbb{P}|\mathbb{Q}):=\int_{\Omega} \ln\left(\frac{\ud \mathbb{P}}{\ud \mathbb{Q}}\right) \ud \mathbb{P},
\] 
where $\frac{\ud \mathbb{P}}{\ud \mathbb{Q}}$ denotes the Radon-Nikodym derivative of $\mathbb{P}$  with respect to $\mathbb{Q}$. If $\mathbb{P}$ is not absolutely continuous with respect to $\mathbb{Q}$, then $D_{KL}(\mathbb{P}|\mathbb{Q})=\infty$. Equivalently, 
\[
D_{KL}(\mathbb{P}|\mathbb{Q})=\int_{\Omega} \frac{\ud \mathbb{P}}{\ud \mathbb{Q}}\ln\left(\frac{\ud \mathbb{P}}{\ud \mathbb{Q}}\right) \ud \mathbb{Q},
\] 
which represents the entropy of $\mathbb{P}$ relative to $\mathbb{Q}$. 

In the particular case of multivariate Gaussian distributions $\mathcal{N}$ and $\widetilde{\mathcal{N}}$ in $\mathbb{R}^d$, with respective means $\eta$ and $\widetilde{\eta}$, and covariance matrices $\Xi$ and $\widetilde{\Xi}$, the relative entropy admits the  explicit representation
\begin{equation}\label{KLgaus}
D_{KL}(\mathcal{N}|\widetilde{\mathcal{N}})=\frac{1}{2}\left(\mathrm{tr}\Big(\widetilde{\Xi}^{-1}\Xi\Big)-d+(\eta- \widetilde{\eta})^{T}\widetilde{\Xi}^{-1}(\eta-\widetilde{\eta})+\ln\left(\frac{\mathrm{det}\widetilde{\Xi}}{\mathrm{det}\Xi}\right)\right),
\end{equation}
see, for instance, Exercise 11 in Chapter 1 of \cite{Par}.

Finally, we introduce  {\it the Wasserstein distance of order 2} (also known as the  {\it Kantorovich-Rubinstein distance} or the {\it Fr\'echet distance}) defined by 
\begin{equation}\label{def:wasserstein}
\mathcal{W}_2(\mathbb{P},\mathbb{Q}):=
\inf_{\Pi} \left(\int_{\Omega\times \Omega}|u-v|^2\Pi(\ud u,\ud v)\right)^{1/2},
\end{equation}
where the infimum is taken over all couplings $\Pi$ of  $\mathbb{P}$ and $\mathbb{Q}$, that is, all probability measures on $\Omega\times \Omega$ having marginals $\mathbb{P}$ and $\mathbb{Q}$.

For multivariate Gaussian distributions $\mathcal{N}$ and $\widetilde{\mathcal{N}}$ in $\mathbb{R}^d$,  with  means $\eta$ and $\widetilde{\eta}$, and covariance matrices $\Xi$ and $\widetilde{\Xi}$, 
the Wasserstein distance of order $2$ admits the closed-form expression
\begin{equation}\label{W2gau}
\mathcal{W}_2^2(\mathcal{N},\widetilde{\mathcal{N}})=|| \eta -\widetilde{\eta}||_2^2+\mathrm{tr}\left(\Xi+\widetilde{\Xi}-2\left(\widetilde{\Xi}^{1/2}\Xi\widetilde{\Xi}^{1/2}\right)^{1/2}\right),
\end{equation}
see,  for example,  identity (4) in Dowson and Landau \cite{DL}. 
\subsection{Main results.}
For simplicity of exposition, we first consider the one-dimensional case.  In this setting,  the  family of LSDE processes $(X^{(\epsilon)}; \epsilon>0)$, defined in \eqref{LSDE}, exhibits explicit decorrelation profiles with respect to the  Kullback-Liebler divergence, the Wasserstein distance of order 2 and the total variation distance (see Figure \ref{Profile}). Notably, the form of these profiles 
is  explicit in each case.

\begin{theorem}\label{theorem1}  For the time scale 
\[
t_\epsilon:=\frac{|\ln \epsilon|}{\theta},
\]
the family of LSDE processes $(X^{(\epsilon)}; \epsilon>0)$ where for each $\epsilon>0$, $X^{(\epsilon)}$ is defined as in  \eqref{LSDE}, exhibits for all asymptotically constant window sizes $\omega_\epsilon \to \omega>0$,  decorrelation profile under: 
\begin{itemize}
	\item[i)] The Kullback-Liebler divergence with profile function
	\begin{eqnarray*}
		G_{KL}(r)=\lim_{\epsilon\to0}D_{KL}( \mathcal{L}^{(\epsilon)}_{t_\epsilon+r\omega_{\epsilon}} \Big|  \mathcal{I}^{(\epsilon)}_{t_\epsilon+r\omega_{\epsilon}} ) = \frac{1}{2} \ln \left(1+\frac{2\theta\sigma_0^2}{\sigma^2 \alpha^2e^{2\theta r\omega}}\right)\,;
	\end{eqnarray*}
	\item[ii)] The  normalised  Wasserstein distance of order 2 with profile function
	\[
	\begin{split}
		G_{\mathcal{W}_2}(r)&=\lim_{\epsilon\to0}\frac{\mathcal{W}_2\Big(\mathcal{L}^{(\epsilon)}_{t_\epsilon+r\omega_{\epsilon}},\mathcal{I}^{(\epsilon)}_{t_\epsilon+r\omega_{\epsilon}}\Big)}{\epsilon}=\sqrt{2\sigma^2_0\left(\frac{\sigma^2}{2\theta\sigma_0^2}+ \frac{e^{-2\theta r\omega}}{\alpha^2}-\sqrt{\frac{\sigma^4}{4\theta^2\sigma_0^4 }+\frac{\sigma^2e^{-2r\theta \omega}}{2\theta\sigma_0^2 \alpha^2}}\right)}\,;
	\end{split}
	\] 
	\item[iii)] The total variation distance with profile
	\[
	\begin{split}
		G_{TV}(r)&=\lim_{\epsilon\to0}||\mathcal{L}^{(\epsilon)}_{t_\epsilon+r\omega_{\epsilon}}-\mathcal{I}^{(\epsilon)}_{t_\epsilon+r\omega_{\epsilon}}||_{TV}=||\mathcal{N}(0, M(r) )-\mathcal{N}(0,\mathbf{I}_2)||_{TV},
	\end{split}
	\] 
	where  
	\[
	M(r)=\begin{pmatrix}
		1 & \phi(r)\\ \phi(r) & 1 \end{pmatrix}\qquad\textrm{and}\qquad \mathbf{I}_2=\begin{pmatrix}1 & 0\\ 0 & 1 \end{pmatrix},
	\]
	respectively, and $r\mapsto\phi(r)$ is given by
	\[
	\phi(r)=\sigma_0\sqrt{\frac{2\theta}{\sigma^2 \alpha^2e^{2\theta rw}+2\theta\sigma_0^2 }}.
	\]
	
\end{itemize}
\end{theorem}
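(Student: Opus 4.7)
The plan is to reduce each part to an explicit computation with bivariate Gaussians and then extract the leading-order behaviour at $t = t_\epsilon + r\omega_\epsilon$. Since $(X_0^{(\epsilon)}, X_t^{(\epsilon)})$ is jointly Gaussian and $\mathcal{I}^{(\epsilon)}_t$ shares the marginals of $\mathcal{L}^{(\epsilon)}_t$, both laws have the same mean $(\mu_0, \Phi(t)\mu_0)$ and the same diagonal of their covariance; they differ only in the off-diagonal term, which is $\Phi(t)\sigma_0^2$ for the joint law and $0$ for the product. From the variation-of-parameters representation
\[
X_t^{(\epsilon)} = \Phi(t)\xi + \epsilon \sigma \Phi(t)\int_0^t \Phi(s)^{-1} \ud B_s,
\]
one reads off $v_t^{(\epsilon)} := \mathrm{Var}(X_t^{(\epsilon)}) = \Phi(t)^2 \sigma_0^2 + \epsilon^2 \sigma^2 \Phi(t)^2 \int_0^t \Phi(s)^{-2} \ud s$.

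Next, I would identify the asymptotics at the critical scale. Since $e^{-\theta t_\epsilon} = \epsilon$, hypothesis \eqref{hyp1} implies $\Phi(t_\epsilon + r\omega_\epsilon) \sim \epsilon e^{-\theta r \omega}/\alpha$. The change of variable $u = t - s$ together with dominated convergence yields $\Phi(t)^2 \int_0^t \Phi(s)^{-2} \ud s \to 1/(2\theta)$, whence
\[
\frac{v_t^{(\epsilon)}}{\epsilon^2} \longrightarrow V(r) := \frac{\sigma_0^2 e^{-2\theta r\omega}}{\alpha^2} + \frac{\sigma^2}{2\theta}, \qquad \frac{v_t^{(\epsilon)} - \Phi(t)^2 \sigma_0^2}{\epsilon^2} \longrightarrow \frac{\sigma^2}{2\theta}.
\]
These two limits are the only inputs required for all three profiles.

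For part (i), since the means agree and $\mathrm{tr}(\Sigma_2^{-1}\Sigma_1) = 2$ (the diagonal of $\Sigma_2^{-1}\Sigma_1$ is $(1,1)$), the formula \eqref{KLgaus} collapses to $D_{KL}(\mathcal{L}^{(\epsilon)}_t | \mathcal{I}^{(\epsilon)}_t) = \tfrac{1}{2}\ln \bigl(v_t^{(\epsilon)} / (v_t^{(\epsilon)} - \Phi(t)^2 \sigma_0^2) \bigr)$, and the two asymptotic identities above then yield $G_{KL}$. For part (iii), the total variation distance is invariant under the diffeomorphism $(x,y) \mapsto ((x-\mu_0)/\sigma_0, (y - \Phi(t)\mu_0)/\sqrt{v_t^{(\epsilon)}})$ that standardises both marginals; after this transformation both laws become centered bivariate Gaussians with $\mathcal{N}(0,1)$ marginals, differing only by the correlation $\phi_\epsilon(t) := \Phi(t)\sigma_0/\sqrt{v_t^{(\epsilon)}}$ for the joint, respectively $0$ for the product. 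Direct substitution shows $\phi_\epsilon(t_\epsilon + r\omega_\epsilon) \to \phi(r)$, and continuity of the TV distance between bivariate Gaussians in the correlation parameter (via Scheff\'e applied to the jointly continuous densities) concludes.

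The main obstacle is part (ii): although \eqref{W2gau} is explicit, the two marginals live at very different scales ($\sigma_0^2 = O(1)$ versus $v_t^{(\epsilon)} = O(\epsilon^2)$), producing a delicate cancellation. Applying the $2 \times 2$ identity $\mathrm{tr}\sqrt{M} = \sqrt{\mathrm{tr}(M) + 2\sqrt{\det M}}$ to $M = \Sigma_2^{1/2}\Sigma_1 \Sigma_2^{1/2}$, where $\Sigma_1$ and $\Sigma_2$ are the joint and product covariances, one arrives at
\[
\mathcal{W}_2^2 = 2\sigma_0^2 + 2 v_t^{(\epsilon)} - 2\sqrt{\sigma_0^4 + (v_t^{(\epsilon)})^2 + 2\sigma_0^2 \sqrt{v_t^{(\epsilon)}(v_t^{(\epsilon)} - \Phi(t)^2 \sigma_0^2)}}.
\]
A first-order expansion of the outer square root around $\sigma_0^4$ cancels the $2\sigma_0^2$ term and leaves $\mathcal{W}_2^2 = 2 v_t^{(\epsilon)} - 2\sqrt{v_t^{(\epsilon)}(v_t^{(\epsilon)} - \Phi(t)^2 \sigma_0^2)} + O(\epsilon^4)$; dividing by $\epsilon^2$ and invoking the limits from Step~2 produces $G_{\mathcal{W}_2}^2(r)$. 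The subtle point is verifying that the discarded remainder is genuinely $O(\epsilon^4)$ and hence negligible at the $\epsilon^2$ scale on which the profile lives.
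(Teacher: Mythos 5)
Your proposal is correct and follows essentially the same strategy as the paper: reduce to explicit bivariate Gaussian formulas (same means, same diagonal, off-diagonal $\Phi(t)\sigma_0^2$ versus $0$), then feed in the two asymptotic facts $\Phi(t_\epsilon+r\omega_\epsilon) \sim \epsilon\, e^{-\theta r\omega}/\alpha$ and $\Phi(t)^2\int_0^t \Phi(s)^{-2}\,\mathrm{d}s \to 1/(2\theta)$ from \eqref{asymptotics}. Parts (i) and (iii) coincide with the paper's argument (for (iii) the paper also passes through $\widetilde{\Xi}_t^{-1/2}\Xi_t\widetilde{\Xi}_t^{-1/2}$, which is exactly your standardisation). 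The one place you genuinely diverge is part (ii): the paper computes the eigenvalues $\lambda_\pm(t)$ of $\widetilde{\Xi}_t^{1/2}\Xi_t\widetilde{\Xi}_t^{1/2}$ via the quadratic formula and then performs several nested Taylor expansions of $\sqrt{\lambda_+}+\sqrt{\lambda_-}$, carrying along auxiliary functions $g_\epsilon$ and $h_\epsilon$; you instead invoke the $2\times 2$ identity $\mathrm{tr}\sqrt{M}=\sqrt{\mathrm{tr}\,M + 2\sqrt{\det M}}$, which packages the eigenvalue sum into one expression and makes the cancellation of the $O(1)$ term fall out of a single first-order expansion of the outer square root around $\sigma_0^4$. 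That is a cleaner path to the same answer. The subtle point you flag is in fact under control: with $v_t^{(\epsilon)} = O(\epsilon^2)$, both $(v_t^{(\epsilon)})^2$ and $\sigma_0^2\sqrt{v_t^{(\epsilon)}(v_t^{(\epsilon)}-\Phi(t)^2\sigma_0^2)}$ are $O(\epsilon^2)$, so the argument of the outer root is $\sigma_0^4 + O(\epsilon^2)$ and the neglected quadratic correction is $O(\epsilon^4)$, negligible after division by $\epsilon^2$. The only small omission relative to the definition of decorrelation profile is the tail check $\lim_{r\to+\infty}G(r)=0$ and $\lim_{r\to-\infty}G(r)=M$ (with $M=+\infty$ for KL and $\mathcal{W}_2$, $M=1$ for TV); these follow immediately from your explicit profile formulas, and the paper carries out the $\mathcal{W}_2$ case verbatim.
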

It is important to note that the Kullback-Liebler divergence is not symmetric, in the sense that the limits of $D_{KL}( \mathcal{I}^{(\epsilon)}_{t} |  \mathcal{L}^{(\epsilon)}_{t} )$ and $D_{KL}( \mathcal{L}^{(\epsilon)}_{t}  |  \mathcal{I}^{(\epsilon)}_{t})$ provide different profile functions. Indeed \[
G_{KL}(r)=\lim_{\epsilon\to0}D_{KL}( \mathcal{I}^{(\epsilon)}_{t_\epsilon+r\omega_{\epsilon}} |  \mathcal{L}^{(\epsilon)}_{t_\epsilon+r\omega_{\epsilon}}    )=\frac{ \theta  \sigma^2_0}{\sigma^2 \alpha^2e^{2\theta r\omega}} -  \frac{1}{2} \ln \left(1+\frac{2\theta\sigma_0^2}{\sigma^2 \alpha^2e^{2\theta r\omega}}\right),
\]
which is different to the profile function in Theorem \ref{theorem1}.\\

\begin{figure}
\begin{subfigure}{.3\textwidth}
	\centering
	\includegraphics[width=.8\linewidth]{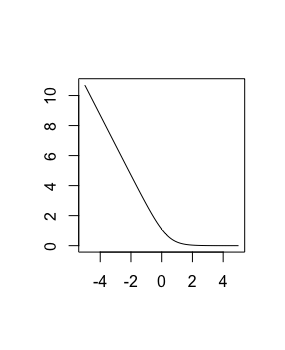}
	\caption{Kullback-Liebler}
	\label{fig:sfig1}
\end{subfigure}
\begin{subfigure}{.3\textwidth}
	\centering
	\includegraphics[width=.8\linewidth]{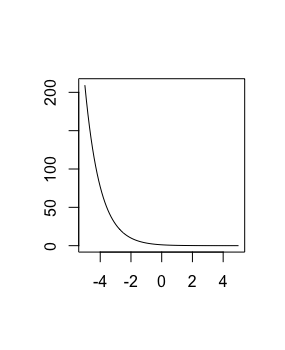}
	\caption{Wasserstein of order $2$}
	\label{fig:sfig2}
\end{subfigure}
\begin{subfigure}{.3\textwidth}
	\centering
	\includegraphics[width=.8\linewidth]{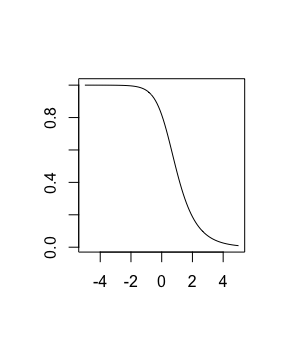}
	\caption{Total Variation}
	\label{fig:sfig3}
\end{subfigure}
\caption{Plots of decorrelation profiles with $\theta=\sigma^2=\sigma_0^2=w=\alpha=1$. }
\label{Profile}
\end{figure}

We now focus on  the case $d\ge 2$. Before presenting our second and final main result, we introduce some notation. Let $P$ denote the change-of-basis matrix in the Jordan decomposition of $Q$, so that $Q=PBP^{-1}$ where $B$ is the Jordan normal form of $Q$ which can be written as
\[
B=\textrm{diag}(B_1,\ldots, B_p),
\]
in terms of the Jordan blocks $B_i$, for $1\le i\le p$. Each $B_i$ is a square matrix associated with a  single eigenvalue.

We denote the eigenvalues of $Q$  by $(\lambda_i)_{1\le i\le p}$ and set 
\[
\vartheta:=\max \{ \Re(\lambda_j)\,:\,j=1,\dots,p \}.
\]

Under our assumptions, we have $\vartheta<0$. An eigenvalue with real part  equal to  $\vartheta$ is called  a critical or leading eigenvalue, which we denote by $\lambda^*$. Let $m$ denote the maximal size of the Jordan blocks associated with any of the leading eigenvalues $\lambda^*$'s and let $\ell$ denote the number of blocks of maximal size $m$.

For $n\in \mathbb{N}$, we define  $E^{(n)}_{ij}$ as the $n\times n$ matrix with a 1 in the $(i,j)$-th position and zeros elsewhere. In particular, 
\[
E^{(n)}_{1n}:=\begin{pmatrix} 0&\dots &0& 1\\0&\dots &0& 0\\
\vdots&  &\vdots& \vdots\\ 0&\dots& 0&0\end{pmatrix}.
\]
The covariance matrix $\rho_t$ of the OU process satisfies
\begin{equation}\label{covt}
\rho_t =e^{tQ}\left(\Sigma_0+\epsilon^2\int_0^{t}e^{-uQ}\sigma\sigma^T e^{-uQ^T}{\rm d}u\right)e^{tQ^T}
, \qquad\textrm{for}\quad t\ge 0.
\end{equation}
For convenience, we also introduce
\begin{equation}\label{vars}
\varsigma_t := \int_{0}^{t} 	e^{sQ} \sigma \sigma^T e^{ s Q^{T} } {\rm d}s .
\end{equation}
Under our assumptions,  the limit $ \varsigma_{\infty} := \lim_{ t \rightarrow \infty} \varsigma_t$ exists, see for instance \cite{KS91}.

In this regime, we show that the family of OU process $(X^{(\epsilon)}; \epsilon>0)$, defined by the SDE in \eqref{SDE}, exhibits explicit decorrelation profiles with respect to both the Kullback-Liebler divergence and the total variation distance. It is also plausible that such a profile could be established under the Wasserstein distance of order 2; however, deriving the corresponding closed-form expression 
appears considerably more challenging.

More precisely, when there is a unique real leading eigenvalue $\lambda^*$, the  OU process exhibits a clear and quantifiable transition in its correlation structure. In contrast, if some of the leading eigenvalues $\lambda^*$'s are complex, the decorrelation behaviour is more intricate: the process displays window decorrelation but not decorrelation profile phenomenon, a consequence driven by the oscillatory nature of the underlying dynamics. In the following we will refer as the real case and the complex one, to the aforementioned 
possibilities concerning to the leading eigenvalues.

\begin{theorem}\label{theorem2}  
Consider the family of OU processes $(X^{(\epsilon)}; \epsilon>0)$, defined in \eqref{SDE} and assume that  $Q$ is Hurwitz stable and the controllability rank condition is fulfilled. Then, abrupt decorrelation occurs with respect to both the  Kullback-Liebler divergence and in the total variation distance at the time scale
$$  t_\epsilon:=\frac{1}{|\vartheta|}\Big(|\ln \epsilon|+\left(m-1\right)\ln|\ln \epsilon|\Big).$$  Moreover,
\begin{itemize}
	\item[i)] when we are in the complex case of the leading eigenvalues $\lambda^*$'s, the family $(X^{(\epsilon)}; \epsilon>0)$ exhibits
	window decorrelation with asymptotically constant window sizes $w_\epsilon\to \frac{w}{|\vartheta|}>0$ in both Kullback-Leibler divergence and total variation distance;
	
	\item[ii)] in the case where there is a unique real eigenvalue $\lambda^*$, the family $\{X^{(\epsilon)}; \epsilon>0\}$ exhibits decorrelation profile phenomenon with  asymptotically constant window sizes $w_\epsilon\to \frac{w}{|\vartheta|}>0$ under; 
	\begin{enumerate}
		\item[a)] The Kullback-Leibler divergence with profile function
		\[
		\begin{split}
			G_{KL}(r) &= \lim_{\epsilon\to0}D_{KL}( \mathcal{L}^{(\epsilon)}_{t_\epsilon+r\omega_{\epsilon}} \Big|  \mathcal{I}^{(\epsilon)}_{t_\epsilon+r\omega_{\epsilon}} )  \\
			& =   -\frac{1}{2}\ln\left(\det\left( \mathbf{I}_d- \Gamma\Sigma_0\Gamma^T\left(\Gamma\Sigma_0\Gamma^T+ e^{2rw}\varsigma_{\infty} \right)^{-1}\right) \right)\,,
		\end{split}
		\]
		where 
		\[
		\Gamma=\frac{|\vartheta|^{1-m}}{(m-1)!} P_1\mathbf{I}^{\ell}_mP_1^{-1}, 
		\] 
		the matrix $P_1$ is obtained by rearranging the block order of $P$ and 
		\[
		\mathbf{I}^{\ell}_m=\begin{pmatrix}
			E_{1m}^{(m)}& & & & &\\ 
			&\ddots & & & &\\
			& &E_{1m}^{(m)} & & &\\
			& & & 0& & \\
			& & & &\ddots & \\
			& & & & & 0
		\end{pmatrix},
		\]
		with $\ell\geq 1$ denoting the number of times that $E_{1m}^{(m)}$ appears along the diagonal of $\mathbf{I}^{\ell}_m$;
		\item[b)] The total variation distance with profile function
		$$G_{TV}(r):=\lim_{\epsilon\to0}||\mathcal{L}^{(\epsilon)}_{t_\epsilon+r\omega_{\epsilon}}-\mathcal{I}^{(\epsilon)}_{t_\epsilon+r\omega_{\epsilon}}||_{TV} = ||\mathcal{N}(0, M(r) )-\mathcal{N}(0,\mathbf{I}_{2d} )||_{TV}, $$ 
		where $M(r)$ is a block matrix given by 
		\[
		M(r)=\begin{pmatrix}
			\mathbf{I}_d &  \Sigma_0^{\frac{1}{2}} \Gamma^T  ( \Gamma \Sigma_0 \Gamma^T + e^{2r w} \varsigma_{\infty} )^{-\frac{1}{2}} 
			\\ ( \Gamma \Sigma_0 \Gamma^T + e^{2rw} \varsigma_{\infty} )^{-\frac{1}{2}} \Gamma \Sigma_0^{\frac{1}{2}} & \mathbf{I}_d  \end{pmatrix}\,.
		\]
	\end{enumerate} 
\end{itemize}

\end{theorem}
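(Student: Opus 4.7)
The plan is to leverage the joint Gaussianity of $(X_0^{(\epsilon)}, X_t^{(\epsilon)})$. Both $\mathcal{L}^{(\epsilon)}_t$ and $\mathcal{I}^{(\epsilon)}_t$ are Gaussian on $\mathbb{R}^{2d}$, with the same mean $(\mu_0, e^{tQ}\mu_0)$ and covariance matrices
\[
\Sigma^J_t = \begin{pmatrix} \Sigma_0 & \Sigma_0 e^{tQ^T} \\ e^{tQ}\Sigma_0 & \rho_t \end{pmatrix}, \qquad \Sigma^I_t = \begin{pmatrix} \Sigma_0 & 0 \\ 0 & \rho_t \end{pmatrix},
\]
where the change of variable $s\mapsto t-s$ in \eqref{covt} yields the identity $\rho_t = e^{tQ}\Sigma_0 e^{tQ^T} + \epsilon^2\varsigma_t$. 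Applying \eqref{KLgaus}, the mean terms cancel, a block multiplication shows $\mathrm{tr}((\Sigma^I_t)^{-1}\Sigma^J_t) = 2d$, and the Schur complement of the lower-right block gives $\det\Sigma^J_t = \det\Sigma_0\cdot\det(\epsilon^2\varsigma_t)$. Consequently,
\[
D_{KL}\bigl(\mathcal{L}^{(\epsilon)}_t\,\big|\,\mathcal{I}^{(\epsilon)}_t\bigr) = \tfrac{1}{2}\ln\det\bigl(\mathbf{I}_d + (\epsilon^2\varsigma_t)^{-1}\,e^{tQ}\Sigma_0 e^{tQ^T}\bigr).
\]

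The second step is the small-noise asymptotic analysis of $e^{tQ}$ at $t=t_\epsilon + rw_\epsilon$. Writing $Q = PBP^{-1}$ in Jordan form and permuting blocks so that the $\ell$ maximal blocks of size $m$ with critical eigenvalue $\lambda^*$ come first (giving $P_1$), each Jordan block $B_i = \lambda_i\mathbf{I} + N$ contributes $e^{\lambda_i t}\sum_{k=0}^{k_i-1}t^kN^k/k!$, with leading term $e^{\lambda^* t}t^{m-1}/(m-1)!\cdot E^{(m)}_{1m}$ on each maximal critical block and strictly lower order on all other blocks. Plugging in the definition of $t_\epsilon$ gives $e^{\vartheta t_\epsilon}t_\epsilon^{m-1}\sim \epsilon/|\vartheta|^{m-1}$, and combined with $w_\epsilon\to w/|\vartheta|$ this leads, in the real case $\lambda^*=\vartheta$, to
\[
\epsilon^{-1}\,e^{tQ}\xrightarrow[\epsilon\to 0]{} e^{-rw}\,\Gamma,
\]
with $\Gamma$ as in the statement. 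When $\lambda^*=\vartheta\pm i\beta$ is complex, the same asymptotic holds up to an orthogonal oscillatory factor of modulus one, so only the norm $\|\epsilon^{-1}e^{tQ}\|$ is controlled, between two positive multiples of $e^{-rw}$.

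For the real case, substituting into the KL formula and using the elementary identity $\det(\mathbf{I}_d - A(A+B)^{-1}) = \det B/\det(A+B)$ with $A = \Gamma\Sigma_0\Gamma^T$ and $B = e^{2rw}\varsigma_\infty$ yields the claimed $G_{KL}(r)$. For the TV distance, invariance under the invertible linear map $\mathrm{diag}(\Sigma_0^{-1/2},\rho_t^{-1/2})$ sends $\mathcal{I}^{(\epsilon)}_t$ to $\mathcal{N}(0,\mathbf{I}_{2d})$ and $\mathcal{L}^{(\epsilon)}_t$ to a Gaussian $\mathcal{N}(0,\tilde M_t)$ with identity diagonal blocks and off-diagonal $\rho_t^{-1/2}e^{tQ}\Sigma_0^{1/2}$. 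Using $\rho_t/\epsilon^2\to \varsigma_\infty + e^{-2rw}\Gamma\Sigma_0\Gamma^T$ together with the factorisation $(\varsigma_\infty + e^{-2rw}\Gamma\Sigma_0\Gamma^T)^{-1/2} = e^{rw}(\Gamma\Sigma_0\Gamma^T + e^{2rw}\varsigma_\infty)^{-1/2}$, the off-diagonal converges to that of $M(r)$, and continuity of the TV distance with respect to Gaussian covariance parameters (via pointwise convergence of the densities and dominated convergence) yields $G_{TV}(r)$. In the complex case, the two-sided modulus bounds from step two imply that $M_t := (\epsilon^2\varsigma_t)^{-1}e^{tQ}\Sigma_0 e^{tQ^T}$ has norm of order $e^{-2rw}$, so $r\to+\infty$ forces the KL and TV distances to zero, while $r\to-\infty$ forces $\det(\mathbf{I}_d + M_t)$ to diverge and the normalised off-diagonal of $\tilde M_t$ to saturate, giving window decorrelation.

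The main obstacle is the Jordan-form asymptotic analysis of $e^{tQ}$: one must carefully show that at the time scale $t_\epsilon$ only the $\ell$ maximal critical Jordan blocks contribute to the leading order, that the rearrangement into $P_1$ produces the stated matrix $\Gamma$, and that polynomial corrections from sub-maximal and non-critical blocks are genuinely negligible. The complex-eigenvalue case is the most delicate point, because the factor $e^{i\beta t}$ prevents pointwise convergence of $\epsilon^{-1}e^{tQ}$; window decorrelation then requires matching two-sided determinantal bounds that depend only on the modulus, which is precisely what obstructs the existence of a profile and explains the dichotomy between parts (i) and (ii) of the theorem.
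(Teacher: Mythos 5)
Your proposal follows essentially the same route as the paper: exploit joint Gaussianity, compute KL and TV via the explicit Gaussian formulas, and feed in the Jordan-block asymptotics of $\epsilon^{-1}e^{(t_\epsilon+r\omega_\epsilon)Q}$ (the paper's Lemma~\ref{Jordan}), which in the real critical case produces $e^{-rw}\Gamma$. Your linear algebra is organised slightly differently but equivalently: you use a Schur complement to get $\det\Xi_t=\det\Sigma_0\det(\epsilon^2\varsigma_t)$ directly and obtain $D_{KL}=\tfrac12\ln\det(\mathbf{I}_d+(\epsilon^2\varsigma_t)^{-1}e^{tQ}\Sigma_0e^{tQ^T})$, whereas the paper applies the block-determinant identity $\det\begin{pmatrix}\mathbf{I}_d&B_{12}\\B_{21}&\mathbf{I}_d\end{pmatrix}=\det(\mathbf{I}_d-B_{21}B_{12})$ to $\Xi_t\widetilde\Xi_t^{-1}$; the two expressions are related by $\mathbf{I}_d-M(M+\varsigma_t)^{-1}=\varsigma_t(M+\varsigma_t)^{-1}$ and coincide. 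Your form of $G_{KL}$ makes the $r\to+\infty$ endpoint immediate, and for $r\to-\infty$ one can argue directly via divergence of eigenvalues of $e^{-2rw}\varsigma_\infty^{-1}\Gamma\Sigma_0\Gamma^T$, which replaces the paper's Moore--Penrose pseudoinverse computation; that endpoint verification (and the analogous $r\to-\infty$ degeneracy of $M(r)$ for TV) should be spelled out explicitly to complete the profile claim in the real case, just as you sketch for the complex case.
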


There is a natural explanation for the coincidence between the cut-off times \cite{BJ, BJ1} and the abrupt decorrelation times. From the explicit representation of the solution \eqref{specific}, one observes that the contribution of the initial condition and that of the diffusive term are separated and mutually independent. Both terms are multiplied by the factor $e^{tQ}$, which converges exponentially fast to zero as $t \to \infty$. Hence, in the long-time regime, the influence of the initial condition vanishes, while the diffusive term converges to the stationary distribution (whenever it exists). The transition occurs precisely at the time scale at which the exponential decay of the initial condition becomes effective, thereby explaining why the cut-off phenomenon and abrupt decorrelation coincide.

We also note that the initial condition may be chosen from an arbitrary distribution, not necessarily Gaussian. In such cases, the OU process is no longer Gaussian, and the explicit forms for the decorrelation profiles with respect to the Kullback-Leibler divergence, the Wasserstein distance of order 2, and the total variation distance are lost. Nonetheless, the qualitative mechanism remains visible, at least under the total variation distance (see, for instance \cite{BLPP1}). We also expect the same phenomenon to persist when the drift term is time-dependent in higher dimensions ($d \geq 2$). Yet, even in the deterministic case, the asymptotic behaviour of inhomogeneous linear systems is subtle and far from being fully understood.

The remainder of this   manuscript is devoted to the proofs of Theorems \ref{theorem1} and \ref{theorem2}.

\section{Proofs}
In this section, we provide the proofs of our main results. For simplicity on exposition,  we first deal with the one dimensional case.
\subsection{One dimensional case.} 
Recall that, for $t\ge 0$, the law of $X^{(\epsilon)}_t$, defined by \eqref{LSDE}, is Gaussian with  mean and variance
\[
\mathbb{E}\Big[X^{(\epsilon)}_t\Big]=\mu_0\Phi(t)\qquad \textrm{and}\qquad \mathrm{Var}\Big(X^{(\epsilon)}_t\Big)=\epsilon^2\sigma^2\Phi^2(t)\int_0^{ t}\frac{{\rm d}u}{\Phi^2(u)}+\sigma_0^2\Phi^2(t),
\]
where $\Phi$ is given by \eqref{eq:linear1}. Moreover the covariance between $X^{(\epsilon)}_0$ and $X^{(\epsilon)}_t$ is such that
\[
\begin{split}
	\mathrm{Cov}\Big(X^{(\epsilon)}_t, X^{(\epsilon)}_0\Big)=\Phi(t)\mathrm{Var}(X^{(\epsilon)}_0)=\Phi(t)\sigma^2_0.
\end{split}
\]
In other words, the couple $(X^{(\epsilon)}_0, X^{(\epsilon)}_t)$  has a Gaussian distribution with mean  $\eta_t=(\mu_0, \mu_0\Phi(t))$ and covariance matrix
\[
\Xi_t=\begin{pmatrix}
	\sigma_0^2 & \sigma_0^2\Phi(t)\\
	\sigma_0^2\Phi(t) & \epsilon^2\sigma^2\Phi^2(t)\int_0^{ t}\frac{{\rm d}u}{\Phi^2(u)}+\sigma_0^2\Phi^2(t)
\end{pmatrix}
\]
and the couple $(Y, X^{(\epsilon)}_t)$ has Gaussian distribution with mean   $\widetilde{\eta}_t=(\mu_0, \mu_0\Phi(t))$ and covariance matrix
\[
\widetilde{\Xi}_t=\begin{pmatrix}
	\sigma_0^2 & 0\\
	0 & \epsilon^2\sigma^2\Phi^2(t)\int_0^{ t}\frac{{\rm d}u}{\Phi^2(u)}+\sigma_0^2\Phi^2(t)
\end{pmatrix} .
\]
\begin{proof}[Proof of Theorem \ref{theorem1}] We first treat part (i). Since $ (X^{(\epsilon)}_0, X^{(\epsilon)}_t)$ and $ (Y, X^{(\epsilon)}_t)$ are multivariate Gaussian distributions, from \eqref{KLgaus}, we have
	\[
	D_{KL}\Big(\mathcal{L}^{(\epsilon)}_t\Big|\mathcal{I}^{(\epsilon)}_t\Big)= \frac{1}{2}\left(\mathrm{tr}(\widetilde{\Xi}_{t}^{-1}\Xi_{t})-2+(\eta_{t}-\widetilde{\eta}_{t})^{T}\widetilde{\Xi}_{t}^{-1}(\eta_{t}-\widetilde{\eta}_{t})+\ln\left(\frac{\mathrm{det }\widetilde{\Xi}_{t}}{\mathrm{det }\Xi_{t}}\right)\right).
	\]
	Observe that 
	\[
	\mathrm{det}\Xi_{t}=\epsilon^2\sigma^2\sigma_0^2\Phi^2(t)\int_0^{ t}\frac{{\rm d}u}{\Phi^2(u)} \qquad \textrm{and}\qquad \mathrm{det}\widetilde{\Xi}_{t}=\epsilon^2\sigma^2\sigma_0^2\Phi^2(t)\int_0^{ t}\frac{{\rm d}u}{\Phi^2(u)}+\sigma_0^4\Phi^2(t).
	\]
	Thus
	\[
	\ln \left(\frac{\mathrm{det}\widetilde{\Xi}_{t}}{\mathrm{det}\Xi_{t}}\right)=\ln\left(1+\frac{\sigma_0^2}{\epsilon^2\sigma^2\int_0^{ t}\frac{{\rm d}u}{\Phi^2(u)}}\right).
	\]
	On the other hand
	\[
	\widetilde{\Xi}_{t}^{-1}=\frac{1}{\mathrm{det}\widetilde{\Xi}_{t}}
	\begin{pmatrix}
		\epsilon^2\sigma^2\Phi^2(t)\int_0^{ t}\frac{{\rm d}u}{\Phi^2(u)}+\sigma_0^2\Phi^2(t)
		& 0\\
		0 & \sigma_0^2
	\end{pmatrix},
	\]
	implying that 
	\[
	\begin{split}
		\mathrm{tr}(\widetilde{\Xi}_{t}^{-1}\Xi_{t})&=\frac{1}{\mathrm{det}\widetilde{\Xi}_{t}}\left(2\sigma^2_0\left[\epsilon^2\sigma^2\Phi^2(t)\int_0^{ t}\frac{{\rm d}u}{\Phi^2(u)}+\sigma_0^2\Phi^2(t)\right]\right)=2.
	\end{split}
	\]
	In other words,
	\[
	D_{KL}\Big(\mathcal{L}^{(\epsilon)}_t\Big|\mathcal{I}^{(\epsilon)}_t\Big)=\frac{1}{2}\ln\left(1+\frac{\sigma_0^2}{\epsilon^2\sigma^2\int_0^{ t}\frac{{\rm d}u}{\Phi^2(u)}}\right).
	\]
	Next we observe that for $t$ sufficiently large,
	\begin{equation}\label{asymptotics}
		e^{\theta t}\Phi(t)= e^{\int_0^t A(s)\ud s}\sim \frac{1}{\alpha}  \quad \textrm{and}\quad \Phi^2(t)\int_0^{ t}\frac{{\rm d}u}{\Phi^2(u)}=\int_0^t e^{-2\theta (t-u)+2\int_u^{t} A(s){\rm d} s}{\rm d} u\sim \frac{1}{2\theta}. 
	\end{equation}
	The latter implies that there is abrupt decorrelation, in the following sense. First, we take $c>0$ and $t_\epsilon=\frac{1}{\theta}|\ln \epsilon|$, for $\epsilon$ small enough, and observe
	\[
	e^{2\theta c t_\epsilon}=e^{2 c \log(1/\epsilon)}=\epsilon^{-2 c},
	\] 
	which implies
	\[
	\epsilon^2e^{2\theta c t_\epsilon}=\epsilon^2\epsilon^{-2 c}=\epsilon^{2(1-c)}\xrightarrow[\epsilon\to 0]{}\left\{
	\begin{array}{ll}
		0,  & \mathrm{ if } \,\,c<1,\\
		+\infty, & \mathrm{ if }\,\, c>1.
	\end{array}
	\right.
	\]
	In other words,  there is abrupt decorrelation for $\{X^{(\epsilon)}, \epsilon>0\}$  at times $t_\epsilon$, since for $c>1$
	\[
	D_{KL}\Big(\mathcal{L}^{(\epsilon)}_{c t_\epsilon}\Big|\mathcal{I}^{(\epsilon)}_{c t_\epsilon}\Big)\xrightarrow[\epsilon\to 0]{}0,
	\]
	and for  $c<1$, we have 
	\[
	D_{KL}\Big(\mathcal{L}^{(\epsilon)}_{c t_\epsilon}\Big|\mathcal{I}^{(\epsilon)}_{c t_\epsilon}\Big)\xrightarrow[\epsilon\to 0]{}+\infty,
	\]
	given that
	\[
	\begin{split}
		\lim_{\epsilon\to 0}\frac{1}{2}\ln\left(1+\frac{\sigma_0^2}{\epsilon^2\sigma^2\int_0^{ c t_\epsilon}\frac{{\rm d}u}{\Phi^2(u)}}\right)&=\lim_{\epsilon\to 0}\frac{1}{2}\ln\left(1+\frac{2\theta\sigma_0^2}{\epsilon^2\sigma^2 \alpha^2e^{2\theta c t_\epsilon}}\right)=\lim_{z\to 0}\frac{1}{2}\ln\left(1+\frac{2\theta\sigma_0^2}{z}\right)=\infty,
	\end{split}
	\]
	where we have used the asymptotic behaviours  in \eqref{asymptotics}. Finally, we observe that $\{X^{(\epsilon)}, \epsilon>0\}$ has  decorrelation profile. In other words, take $\omega_\epsilon\to \omega$, as $\epsilon\to 0$, then from \eqref{asymptotics}, we have
	\[
	\begin{split}
		G_{KL}(r)&=\lim_{\epsilon\to0}D_{KL}(\mathcal{L}^{(\epsilon)}_{t_\epsilon+r\omega_{\epsilon}}|\mathcal{I}^{(\epsilon)}_{t_\epsilon+r\omega_{\epsilon}})=\lim_{\epsilon\to0}\frac{1}{2}\ln\left(1+\frac{\sigma_0^2}{\epsilon^2\sigma^2\int_0^{ t_\epsilon+r\omega_{\epsilon}}\frac{{\rm d}u}{\Phi^2(u)}}\right)\\
		&= \lim_{\epsilon\to0}\frac{1}{2}\ln\left(1+\frac{2\theta\sigma_0^2}{\sigma^2 \alpha^2e^{2\theta r\omega_\epsilon}}\right)=\frac{1}{2}\ln\left(1+\frac{2\theta\sigma_0^2}{\sigma^2 \alpha^2e^{2\theta r\omega}}\right),
	\end{split}
	\]
	as expected. This prove part (i).
	
	Now, we treat part (ii). For simplicity of exposition,  we introduce the following notation. Let $a:=\sigma_0^2, c_t:=\sigma_0^2\Phi(t)$ and 
	\begin{equation}\label{notpartii}
		b_t:=\epsilon^2\sigma^2\Phi^2(t)\int_0^{ t}\frac{{\rm d}u}{\Phi^2(u)}+\sigma_0^2\Phi^2(t), \qquad \textrm{for}\quad t\ge 0.
	\end{equation}
	In other words, we have
	\begin{equation}\label{matrices}
		\Xi_t =\begin{pmatrix}
			a & c_t\\
			c_t & b_t
		\end{pmatrix} \qquad\textrm{ and }\qquad\widetilde{\Xi}_t =\begin{pmatrix}
			a & 0\\
			0 & b_t
		\end{pmatrix}.
	\end{equation}
	Thus from \eqref{W2gau}, we obtain
	\[
	\mathcal{W}^2_2\Big(\mathcal{L}^{(\epsilon)}_{t},\mathcal{I}^{(\epsilon)}_{t}\Big)=\mathrm{tr}\left(\Xi_t+\widetilde{\Xi}_t-2\left(\widetilde{\Xi}^{1/2}_t\Xi_t\widetilde{\Xi}_t^{1/2}\right)^{1/2}\right).
	\]
	First, we observe that 
	\[
	\mathrm{tr}\Big(\Xi_t+\widetilde{\Xi}_t\Big)=2(a+b_t) \qquad \textrm{ and }\qquad \widetilde{\Xi}^{1/2}_t =\begin{pmatrix}
		a^{1/2} & 0\\
		0 & b^{1/2}_t
	\end{pmatrix}.
	\]
	Thus
	\begin{equation}\label{tomate}
		\begin{split}
			\widetilde{\Xi}_t^{1/2}\Xi_t\widetilde{\Xi}_t^{1/2}&=\begin{pmatrix}
				a^{1/2} & 0\\
				0 & b^{1/2}_t
			\end{pmatrix}
			\begin{pmatrix}
				a & c_t\\
				c_t & b_t
			\end{pmatrix}
			\begin{pmatrix}
				a^{1/2} & 0\\
				0 & b^{1/2}_t
			\end{pmatrix}\\
			&=\begin{pmatrix}
				a^{3/2} & a^{1/2}c_t\\
				b^{1/2}_tc_t & b^{3/2}_t
			\end{pmatrix}
			\begin{pmatrix}
				a^{1/2} & 0\\
				0 & b^{1/2}_t
			\end{pmatrix}\\
			&=\begin{pmatrix}
				a^{2} & (ab_t)^{1/2}c_t\\
				(ab_t)^{1/2}c_t & b^{2}_t
			\end{pmatrix}.
		\end{split}
	\end{equation}
	In order to compute the trace of the above matrix, we first note that its eigenvalues   satisfy
	\[
	\lambda_{\pm}(t):=\frac{1}{2}\left(a^2+b_t^2\pm \sqrt{(a^2-b^2_t)^2+4ab_tc_t^2}\right),
	\]
	thus the  trace of $\widetilde{\Xi}_t^{1/2}\Xi_t\widetilde{\Xi}_t^{1/2}$ is such that 
	\[
	\mathrm{tr}\left(\left(\widetilde{\Xi}_t^{1/2}\Xi_t\widetilde{\Xi}_t^{1/2}\right)^{1/2}\right)=\lambda^{1/2}_+(t) +\lambda^{1/2}_{-}(t).
	\]
	Putting all pieces together, we have
	\[
	\mathcal{W}^2_2\Big(\mathcal{L}^{(\epsilon)}_{t},\mathcal{I}^{(\epsilon)}_{t}\Big)=2\left(a+b_t-\sqrt{\frac{1}{2}\left(a^2+b^2_t+ \sqrt{\Delta(t)}\right)}-\sqrt{\frac{1}{2}\left(a^2+b^2_t- \sqrt{\Delta(t)}\right)}\right),
	\]
	where $\Delta(t):=(a^2-b^2_t)^2+4ab_tc^2_t$. 
	
	Next for $\epsilon\in(0,1)$ and $\omega\in \mathbb{R}$, we define
	$$ \overline{t}_\epsilon:=\frac{|\ln \epsilon|}{\theta} +\omega$$ 
	and observe 
	\[
	\begin{split}
		\Delta(\overline{t}_\epsilon)&=\sigma_0^8\left[\left(1-\left(\frac{\epsilon^2\sigma^2}{\sigma_0^2}\Phi^2(\overline{t}_\epsilon)\int_0^{ \overline{t}_\epsilon}\frac{{\rm d}u}{\Phi^2(u)}+\Phi^2(\overline{t}_\epsilon)\right)^2\right)^2\right.\\
		&\hspace{1cm}\left. +4 \Phi^2(\overline{t}_\epsilon)\left(\frac{\epsilon^2\sigma^2}{\sigma_0^2}\Phi^2(\overline{t}_\epsilon)\int_0^{ \overline{t}_\epsilon}\frac{{\rm d}u}{\Phi^2(u)}+\Phi^2(\overline{t}_\epsilon)\right)\right]\\
		&=\sigma_0^8\left[\left(1-\epsilon^4\left(\frac{\sigma^2}{\sigma_0^2}\Phi^2(\overline{t}_\epsilon)\int_0^{ \overline{t}_\epsilon}\frac{{\rm d}u}{\Phi^2(u)}+e^{-2\theta \omega}e^{2\theta \overline{t}_\epsilon}\Phi^2(\overline{t}_\epsilon) \right)^2\right)^2\right.\\
		&\hspace{1cm}\left. +4  \epsilon^4e^{-2\theta \omega}e^{2\theta \overline{t}_\epsilon}\Phi^2(\overline{t}_\epsilon)\left(\frac{\sigma^2}{\sigma_0^2}\Phi^2(\overline{t}_\epsilon)\int_0^{ \overline{t}_\epsilon}\frac{{\rm d}u}{\Phi^2(u)}+e^{-2\theta \omega}e^{2\theta \overline{t}_\epsilon}\Phi^2(\overline{t}_\epsilon)\right)\right]\\
		&=\sigma_0^8\left[\left(1-\epsilon^4 g_\epsilon(\omega)^2\right)^2+4 \epsilon^4e^{-2\theta \omega}e^{2\theta \overline{t}_\epsilon}\Phi^2(\overline{t}_\epsilon)g_\epsilon(\omega)\right]
	\end{split}
	\]
	where 
	\[
	g_\epsilon(\omega):=\frac{\sigma^2}{\sigma_0^2}\Phi^2(\overline{t}_\epsilon)\int_0^{ \overline{t}_\epsilon}\frac{{\rm d}u}{\Phi^2(u)}+e^{-2\theta \omega}e^{2\theta \overline{t}_\epsilon}\Phi^2(\overline{t}_\epsilon).
	\]
	Hence
	\[
	\begin{split}
		\frac{\Delta(\overline{t}_\epsilon)}{\sigma_0^8}
		&=1-2\epsilon^4 g_\epsilon(\omega)^2+\epsilon^8 g_\epsilon(\omega)^4+4 \epsilon^4e^{-2\theta \omega}e^{2\theta \overline{t}_\epsilon}\Phi^2(\overline{t}_\epsilon)g_\epsilon(\omega)\\
		&=1-\epsilon^4 g_\epsilon(\omega)\Big(2g_\epsilon(\omega)-\epsilon^4 g_\epsilon(\omega)^3-4 e^{-2\theta \omega}e^{2\theta \overline{t}_\epsilon}\Phi^2(\overline{t}_\epsilon)\Big)\\
		&=1-\epsilon^4 h_\epsilon(\omega),
	\end{split}
	\]
	where $h_\epsilon(\omega):=g_\epsilon(\omega)(2g_\epsilon(\omega)-\epsilon^4 g_\epsilon(\omega)^3-4 e^{-2\theta \omega}e^{2\theta \overline{t}_\epsilon}\Phi^2(\overline{t}_\epsilon))$.
	We note that $g_\epsilon(\omega)$ and $h_\epsilon(\omega)$ are continuous functions of $\epsilon\in[0,1)$, and at $\epsilon=0$, after using \eqref{asymptotics},  we have
	\begin{equation}\label{zero}
		g_0(\omega)=\frac{\sigma^2}{2\theta\sigma_0^2}+ \frac{e^{-2\theta \omega}}{\alpha^2}\qquad\mbox{ and }\qquad h_0(\omega)=g_0(\omega)\Big(2g_0(\omega)-4 \frac{e^{-2\theta \omega}}{\alpha^2}\Big).
	\end{equation}
	Recalling that $\sqrt{1+\delta}=1+\frac{\delta}{2}+O(\delta^2)$ for $\delta$ small enough, we deduce
	$$\sqrt{\frac{\Delta(\overline{t}_\epsilon)}{\sigma_0^8}}=\sqrt{1-\epsilon^4 h_\epsilon(\omega)}=1-\epsilon^4\frac{h_\epsilon(\omega)}{2}+O(\epsilon^8), $$
	for $\epsilon$ close to $0$. Since 
	$$\frac{a^2+b^2_{\overline{t}_\epsilon}}{\sigma_0^4}=1+\epsilon^4g_\epsilon(\omega)^2\,,$$
	we get that 
	$$\frac{a^2+b^2_{\overline{t}_\epsilon}+\sqrt{\Delta(\overline{t}_\epsilon)}}{\sigma_0^4}=2+\epsilon^4\left( g_\epsilon(\omega)^2-\frac{h_\epsilon(\omega)}{2}\right) +O(\epsilon^8)\,,$$
	and 
	$$\frac{a^2+b^2_{\overline{t}_\epsilon}-\sqrt{\Delta(\overline{t}_\epsilon)}}{\sigma_0^4}=\epsilon^4\left( g_\epsilon(\omega)^2+\frac{h_\epsilon(\omega)}{2}\right) +O(\epsilon^8)=\epsilon^4\left(  g_\epsilon(\omega)^2+\frac{h_\epsilon(\omega)}{2} +O(\epsilon^4)\right)\,.$$
	Hence,
	$$\sqrt{\frac{a^2+b^2_{\overline{t}_\epsilon}+\sqrt{\Delta(\overline{t}_\epsilon)}}{2\sigma_0^4}}=\sqrt{ 1+\frac{\epsilon^4}{2}\left( g_\epsilon(\omega)^2-\frac{h_\epsilon(\omega)}{2}\right)  +O(\epsilon^8)}=1+\frac{\epsilon^4}{4}\left( g_\epsilon(\omega)^2-\frac{h_\epsilon(\omega)}{2}\right) +O(\epsilon^8)\,,$$
	and 
	$$\sqrt{\frac{a^2+b^2_{\overline{t}_\epsilon}-\sqrt{\Delta(\overline{t}_\epsilon)}}{2\sigma_0^4}}=\sqrt{\frac{\epsilon^4}{2}\left(g_\epsilon(\omega)^2+\frac{h_\epsilon(\omega)}{2}+O(\epsilon^4)\right)}=\epsilon^2\sqrt{ \frac{1}{2}\left( g_\epsilon(\omega)^2+\frac{h_\epsilon(\omega)}{2}\right) +O(\epsilon^4)}\,.$$
	Using the previous estimates, we can conclude that 
	\[
	\begin{split}
		\mathcal{W}^2_2\Big(\mathcal{L}^{(\epsilon)}_{\overline{t}_\epsilon},\mathcal{I}^{(\epsilon)}_{\overline{t}_\epsilon}\Big)&=2\left(a+b_{t_\epsilon}-\sqrt{\frac{1}{2}\left(a^2+b^2_{\overline{t}_\epsilon}+ \sqrt{\Delta(\overline{t}_\epsilon)}\right)}-\sqrt{\frac{1}{2}\left(a^2+b^2_{\overline{t}_\epsilon}- \sqrt{\Delta(\overline{t}_\epsilon)}\right)}\right)\\
		&=2\sigma^2_0\left(1+\epsilon^2 g_\epsilon(\omega)-\left(1+\frac{\epsilon^4}{4}\left( g_\epsilon(\omega)^2-\frac{h_\epsilon(\omega)}{2}\right)+O(\epsilon^8)\right)\right.\\
		&\left.\hspace{6cm}- \epsilon^2\sqrt{ \frac{1}{2}\left(  g_\epsilon(\omega)^2+\frac{h_\epsilon(\omega)}{2}\right) +O(\epsilon^4)}\,\right)\\ 
		&=2\sigma^2_0\epsilon^2\left(g_\epsilon(\omega)- \sqrt{ \frac{1}{2}\left( g_\epsilon(\omega)^2+\frac{h_\epsilon(\omega)}{2}\right) +O(\epsilon^4)} \right.\\
		&\left.\hspace{6cm}- \frac{\epsilon^2}{4}\left( g_\epsilon(\omega)^2-\frac{h_\epsilon(\omega)}{2}\right) +\frac{O(\epsilon^8)}{\epsilon^2}\right)\,. 
	\end{split} 
	\]
	Together with \eqref{zero}, this implies that 
	\[
	\begin{split}
		\lim_{\epsilon\searrow0}\frac{\mathcal{W}^2_2\Big(\mathcal{L}^{(\epsilon)}_{\overline{t}_\epsilon},\mathcal{I}^{(\epsilon)}_{\overline{t}_\epsilon}\Big)}{\epsilon^2}&=2\sigma^2_0\left(g_0(\omega)-\sqrt{\frac{1}{2}\left(g^2_0(\omega)+\frac{h_0(\omega)}{2}\right)}\right)\\
		&=2\sigma^2_0\left(g_0(\omega)-\sqrt{g^2_0(\omega)-g_0(\omega)\frac{e^{-2\theta \omega}}{\alpha^2}}\right)\\
		&=2\sigma^2_0\left(\frac{\sigma^2}{2\theta\sigma_0^2}+ \frac{e^{-2\theta \omega}}{\alpha^2}-\sqrt{\frac{\sigma^4}{4\theta^2\sigma_0^4 }+\frac{\sigma^2e^{-2\theta \omega}}{2\theta\sigma_0^2\alpha^2}}\right)\,.
	\end{split}
	\]
	From  similar computations, one can see that if instead we consider  $t_\epsilon+r\omega_\epsilon,$ 
	where $w_\epsilon\to w>0$,  then
	$$H(r):=\lim_{\epsilon\searrow0}\frac{\mathcal{W}^2_2\Big(\mathcal{L}^{(\epsilon)}_{t_\epsilon+r\omega_\epsilon},\mathcal{I}^{(\epsilon)}_{t_\epsilon+r\omega_\epsilon}\Big)}{\epsilon^2}=2\sigma^2_0\left(\frac{\sigma^2}{2\theta\sigma_0^2}+ \frac{e^{-2\theta r\omega}}{\alpha^2}-\sqrt{\frac{\sigma^4}{4\theta^2\sigma_0^4 }+\frac{\sigma^2 e^{-2r\theta \omega}}{2\theta\sigma_0^2\alpha^2}}\right)\,.$$ 
	Next, we verify the limiting behaviour of  $H(r)$ when $r$ is close to $\infty$ and to $-\infty$ (see Figure 1 for the form of $P$ when $\theta=\sigma^2=\sigma_0^2=\omega=\alpha=1$). For the first case, we observe
	$$\lim_{r\to\infty}H(r)=2\sigma^2_0\left(\frac{\sigma^2}{2\theta\sigma_0^2}-\sqrt{\frac{\sigma^4}{4\theta^2\sigma_0^4 }}\right)=0\,.$$
	On the other hand, when  $r\to-\infty$, we first see
	\[
	\begin{split}
		H(r) &= 2\sigma^2_0\left(\frac{\sigma^2}{2\theta\sigma_0^2}+ \frac{e^{-2\theta r\omega}}{\alpha^2}-e^{-r\theta \omega}\sqrt{\frac{\sigma^4 e^{2r\theta \omega}}{4\theta^2\sigma_0^4 }+\frac{\sigma^2}{2\theta\sigma_0^2\alpha^2}}\right).
	\end{split}
	\]
	The latter implies that 
	$$\lim_{r\to-\infty}H(r)=2\sigma^2_0\lim_{r\to-\infty}\left(\frac{\sigma^2}{2\theta\sigma_0^2}+ \frac{e^{-2\theta r\omega}}{\alpha^2}-e^{-r\theta \omega}\sqrt{\frac{\sigma^2e^{2r\theta \omega}}{4\theta^2\sigma_0^4 }+\frac{\sigma^2}{2\theta\sigma_0^2 \alpha^2}}\right)=\infty\,.$$
	We conclude our proof by observing that $H(r)\ge 0$ is always positive  and taking square roots in both sides in the previous identity, that is $G_{\mathcal{W}_2}(r)=\sqrt{H(r)}$.

	Finally, we consider part (iii).  We first
	recall the   notation of part (ii), that is $a=\sigma_0^2, c_t=\sigma_0^2\Phi(t)$ and $b_t$ which is defined in \eqref{notpartii}. We  also recall that $\mathcal{L}^{(\epsilon)}_{t}$ and $\mathcal{I}^{(\epsilon)}_{t}$ have the same law as  $\mathcal{N}(\eta_t, \Xi_t)$ and $\mathcal{N}(\widetilde{\eta}_t, \widetilde{\Xi}_t)$,  respectively,   where $\eta_t=\widetilde{\eta}_t=(\mu_0, \mu_0\Phi(t))$ and the matrices $\Xi_t$ and $\widetilde{\Xi}_t$ are defined as in \eqref{matrices}.
	
	Hence, we use the following well-known properties  of the total variation distance (see for instance Lemma A.1 parts (ii) and (iv) in \cite{BJ1})  to obtain
	\[
	\begin{split}
		||\mathcal{L}^{(\epsilon)}_{t}-\mathcal{I}^{(\epsilon)}_{t}||_{TV}&=|| \mathcal{N}(\eta_t, \Xi_t)-\mathcal{N}(\widetilde{\eta}_t, \widetilde{\Xi}_t)||_{TV}\\
		&=|| \mathcal{N}(\eta_t-\widetilde{\eta}_t, \Xi_t)-\mathcal{N}(0, \widetilde{\Xi}_t)||_{TV}\\
		&=|| \mathcal{N}(0, \widetilde{\Xi}_t^{-1/2}\Xi_t\widetilde{\Xi}_t^{-1/2})-\mathcal{N}(0, \mathbf{I}_2)||_{TV}.
	\end{split}
	\]
	On the other hand, a similar computation as in \eqref{tomate}, allow us to deduce
	\[
	\widetilde{\Xi}_t^{-1/2}\Xi_t\widetilde{\Xi}_t^{-1/2}=\begin{pmatrix}
		1 & (ab_t)^{-1/2}c_t\\
		(ab_t)^{-1/2}c_t & 1
	\end{pmatrix},
	\]
	where $(ab_t)^{-1/2}c_t$ satisfies
	$$(ab_t)^{-1/2}c_t=\frac{\sigma_0\Phi(t)}{\sqrt{\epsilon^2\sigma^2\Phi^2(t)\int_0^{ t}\frac{{\rm d}u}{\Phi^2(u)}+\sigma_0^2\Phi^2(t)}} .$$ 
	Next for $\epsilon\in(0,1)$ and $\omega_\epsilon\to\omega\in \mathbb{R}$, we consider
	$t_\epsilon +r\omega_\epsilon$ 
	and observe 
	\[
	\begin{split}
		(ab_{t_\epsilon +r\omega_\epsilon})^{-1/2}c_{t_\epsilon +r\omega_\epsilon}&=\frac{\sigma_0\epsilon e^{-\theta r\omega_\epsilon+ \int_0^{t_\epsilon +r\omega_\epsilon}A(s){\rm d}s}}{\sqrt{\epsilon^2\sigma^2\Phi^2(t_\epsilon +r\omega_\epsilon)\int_0^{ t_\epsilon +r\omega_\epsilon}\frac{{\rm d}u}{\Phi^2(u)}+\sigma_0^2\epsilon^2 e^{-2\theta r\omega_\epsilon+ 2\int_0^{t_\epsilon +r\omega_\epsilon}A(s){\rm d}s}}}\\
		& \to\sigma_0\sqrt{\frac{2\theta}{\sigma^2 \alpha^2e^{2\theta rw}+2\theta\sigma_0^2 }}\,,\qquad\mbox{ as }\,\epsilon\to 0\,.
	\end{split}
	\]
	In other words,
	\[
	\lim_{\epsilon\to 0}||\mathcal{L}^{(\epsilon)}_{t_\epsilon +r\omega_\epsilon}-\mathcal{I}^{(\epsilon)}_{t_\epsilon +r\omega_\epsilon}||_{TV}=||\mathcal{N}(0, M_r)-\mathcal{N}(0,\mathbf{I}_2)||_{TV},
	\]
	as expected. This concludes  the proof.
\end{proof}

\subsection{The multidimensional case}

Recall that  $Q$ is a $d \times d$ Hurwitz-stable matrix and that the fundamental solution to \eqref{difeq} is given by
$\Phi(t)=e^{ t Q},$ for  $t\ge 0.$
We also recall  that the controllability rank condition \eqref{rankcond} is fulfilled and that  the process $X^{(\epsilon)}$ is the unique strong solution of  \eqref{SDE} with initial condition $X_0= \xi$,  a multivariate Gaussian distribution in $\mathbb{R}^{d}$ with mean  $\mu_0$ and covariance matrix $\Sigma_0$,  independent of the Brownian motion $B$. Hence the couple $(X_0,X_t)$ is  a  multivariate Gaussian distribution with covariance  matrix
$$
\Xi_t=
\begin{pmatrix}
	\Sigma_0 & \Sigma_0\, e^{ t Q^{T} } \\ 
	e^{ t Q } \,\Sigma_0  & \rho_t 
\end{pmatrix}
$$
where $\rho_t$ is defined in \eqref{covt}.  For a matrix $A$, we  define 
\[
f_t(A):= A \Sigma_0 A^{T} + \varsigma_t ,
\] 
where $\varsigma_t$ is defined in \eqref{vars}.
Thus the covariance matrix $\rho_t$ can be rewritten as follows
$$ \rho_t=e^{tQ}\Sigma_0e^{tQ^T}+\epsilon^2 \varsigma_t=\epsilon^2\left( \left(\epsilon^{-1}e^{tQ}\right)\Sigma_0\left(\epsilon^{-1}\left(e^{tQ}\right)^T\right)+\varsigma_t\right)=\epsilon^2 f_t \left(\epsilon^{-1}e^{tQ}\right). $$
We also recall that  $Y$ is an independent  copy of  $X_0$  and independent of the Brownian motion $B$, so the couple $(Y,X_t)$ is a multivariate Gaussian distribution with covariance  matrix given by
$$
\widetilde{\Xi}_t=
\begin{pmatrix}
	\Sigma_0 & 0 \\ 
	0  & \rho_t 
\end{pmatrix}.
$$
Before proceeding to the proof of Theorem \ref{theorem2}, we establish the following technical lemma that will be instrumental in the subsequent analysis.
\begin{lemma}\label{Jordan} Let $Q\in\mathbb{R}^{d\times d}$ be a Hurwitz-stable matrix.  Denote by $\vartheta$  the real part of any critical eigenvalue, and let $t_\epsilon$  and  $\omega_\epsilon$ be defined as in the statement of Theorem \ref{theorem2}. \begin{itemize}
		\item[i)] If there is a unique real critical eigenvalue, then the following limit holds
		\[
		\lim_{\epsilon \downarrow 0} \frac{e^{(t_{\epsilon}+r\omega_\epsilon) Q}}{\epsilon}=e^{-rw}\frac{|\vartheta|^{1-m}}{(m-1)!} P_1\mathbf{I}^{\ell}_mP_1^{-1}=e^{-rw}\Gamma\,,
		\]
		where $P_1$, $\mathbf{I}^{\ell}_m$ and $\Gamma$ are defined as in the statement of Theorem \ref{theorem2}.
		\item[ii)] In the complex case, the above  limit does not converge in general. However, there exist a subsequence $(\epsilon_n)_{n\ge 1}$ with $\epsilon_n\downarrow 0$ such that the limit  exists along this subsequence.
	\end{itemize}
\end{lemma}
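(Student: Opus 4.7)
The natural approach is to reduce to a block-by-block computation via the Jordan decomposition $Q=PBP^{-1}$, which gives $e^{sQ}=P\,e^{sB}\,P^{-1}$. Writing each Jordan block as $B_i=\lambda_i\mathbf{I}_{n_i}+N_i$ with $N_i$ nilpotent of index $n_i$, one has
\[
e^{sB_i}=e^{s\lambda_i}\sum_{k=0}^{n_i-1}\frac{s^k}{k!}N_i^{k},
\]
with dominant term $\frac{s^{n_i-1}}{(n_i-1)!}\,e^{s\lambda_i}\,E_{1n_i}^{(n_i)}$. The time scale $t_\epsilon$ has been engineered precisely so that only the blocks with $\Re(\lambda_i)=\vartheta$ and maximal size $n_i=m$ contribute at order $\epsilon$, while every other block is negligible after division by $\epsilon$.

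The main calculation is the substitution $s=t_\epsilon+r\omega_\epsilon$. From $t_\epsilon=|\vartheta|^{-1}(|\ln\epsilon|+(m-1)\ln|\ln\epsilon|)$ and $\omega_\epsilon\to w/|\vartheta|$, one checks
\[
e^{s\vartheta}=\epsilon\,|\ln\epsilon|^{-(m-1)}\,e^{r\omega_\epsilon\vartheta}\bigl(1+o(1)\bigr),\qquad s^{m-1}=|\vartheta|^{-(m-1)}|\ln\epsilon|^{m-1}\bigl(1+o(1)\bigr),
\]
so that $\epsilon^{-1}s^{m-1}e^{s\vartheta}/(m-1)!\to |\vartheta|^{-(m-1)}e^{-rw}/(m-1)!$. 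For any block with $\Re(\lambda_i)<\vartheta$, the extra exponential gap $e^{s(\Re(\lambda_i)-\vartheta)}$ forces $e^{sB_i}/\epsilon\to 0$, and for blocks with $\Re(\lambda_i)=\vartheta$ but $n_i<m$, the same conclusion holds since one loses a factor $|\ln\epsilon|^{n_i-m}$. For blocks with $\Re(\lambda_i)=\vartheta$ and $n_i=m$, only the leading nilpotent power $N_i^{m-1}=E_{1m}^{(m)}$ survives in the limit, carrying the additional phase factor $e^{is\Im(\lambda_i)}$.

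In case (i), where $\lambda^*$ is real, this phase equals $1$. Permuting the Jordan blocks so that the $\ell$ maximal critical blocks occupy the top-left of $B$ and updating the change-of-basis matrix accordingly to obtain $P_1$, the surviving contributions assemble block-diagonally into $\mathbf{I}_m^\ell$, giving
\[
\lim_{\epsilon\downarrow 0}\frac{e^{(t_\epsilon+r\omega_\epsilon)Q}}{\epsilon}=e^{-rw}\,\frac{|\vartheta|^{1-m}}{(m-1)!}\,P_1\,\mathbf{I}_m^\ell\,P_1^{-1}=e^{-rw}\Gamma,
\]
as claimed. In case (ii), where $\Im(\lambda^*)\neq 0$, the scalar factors $e^{is\Im(\lambda_i)}$ associated with each critical block oscillate on the unit circle as $\epsilon\downarrow 0$, so the full limit fails to exist in general. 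Since the unit circle is compact, a diagonal extraction yields a subsequence $(\epsilon_n)_{n\ge 1}$, $\epsilon_n\downarrow 0$, along which $e^{i(t_{\epsilon_n}+r\omega_{\epsilon_n})\Im(\lambda_i)}$ converges simultaneously for every critical eigenvalue $\lambda_i$; combined with the polynomial/exponential analysis of the previous step, this delivers convergence of $e^{sQ}/\epsilon$ along $(\epsilon_n)$ to an explicit (phase-dependent) matrix.

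The principal obstacle will be the bookkeeping rather than any single estimate: one must identify precisely which blocks survive after normalisation by $\epsilon$, match the chosen reordering of blocks with the structure of $\mathbf{I}_m^\ell$, and control the $o(1)$ terms uniformly in $r$ on compact sets so that the algebraic identification of the limit is rigorous. In the complex case, additional care is required when $Q$ has several distinct critical eigenvalues (necessarily arising in conjugate pairs), because the subsequence must be chosen so that \emph{all} the corresponding phases converge together.
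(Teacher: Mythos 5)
Your proposal is correct and follows essentially the same route as the paper: Jordan decomposition, identification of the slowest-decaying term $s^{m-1}e^{s\vartheta}$, the asymptotics $e^{s\vartheta}=\epsilon|\ln\epsilon|^{-(m-1)}e^{\vartheta r\omega_\epsilon}$ at $s=t_\epsilon+r\omega_\epsilon$, elimination of all non-maximal and non-critical blocks, and a compactness/subsequence argument for the oscillating phases in the complex case. The only cosmetic difference is that you work with complex scalar phases $e^{is\Im(\lambda_i)}$ while the paper uses the real Jordan form with $2m_j\times 2m_j$ rotation blocks; these are equivalent, and your explicit diagonal-extraction remark actually fills in a step the paper only asserts.
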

\begin{proof} From the Jordan decomposition of $Q$, we have  $e^{tQ}=PJP^{-1}$, where
	$$J=\begin{pmatrix}
		e^{tB_1}& & \\ 
		&\ddots &  \\
		& & e^{tB_p} 
	\end{pmatrix},$$
	is the Jordan canonical form of $e^{tQ}$. Observe that all the others entries in $J$ are zero and each $e^{tB_j}$ is a square matrix of one of the following forms
	\begin{equation}\label{eq:Jordan}
		(i)\,\, e^{tB_j}=\begin{pmatrix}
			e^{\lambda_j t}&te^{\lambda_j t}&\dots&\frac{t^{m_j-1}}{(m_j-1)!}e^{\lambda_j t}\\ 
			&e^{\lambda_j t}&\dots&\frac{t^{m_j-2}}{(m_j-2)!}e^{\lambda_j t}\\
			& &\ddots & \vdots \\
			& & & e^{\lambda_j t } 
		\end{pmatrix}\,\mbox{ or } (ii)\,\, e^{tB_j}=\begin{pmatrix}e^{\Lambda_j t}&te^{\Lambda_j t}&\dots&\frac{t^{m_j-1}}{(m_j-1)!}e^{\Lambda_j t}\\ 
			&e^{\Lambda_j t}&\dots&\frac{t^{m_j-2}}{(m_j-2)!}e^{\Lambda_j t}\\
			& &\ddots & \vdots \\
			& & & e^{\Lambda_j t }\end{pmatrix}
	\end{equation}
	with 
	$$e^{\Lambda_j t}=e^{\alpha_jt} \begin{pmatrix}\cos\left(\beta_j t\right)&-\sin\left(\beta_j t\right)\\
		\sin\left(\beta_j t\right)& \cos\left(\beta_j t\right)\end{pmatrix}\,.$$
	This representation follows from the Jordan decomposition, where in (i) the eigenvalue $\lambda_j\in\mathbb{R}$ and $e^{tB_j}$ is a $m_j\times m_j$ matrix,  while in (ii) the eigenvalue $\lambda_j=\alpha_j+i\beta_j\in\mathbb{C}$ has a non zero imaginary part and $e^{tB_j}$ is a $2m_j\times 2m_j$ matrix. Since we are assuming that 
	\[
	\vartheta=\max \{ \Re(\lambda_i)\,:\,j=1,\dots,k \} <0\,,
	\] 
	it happens that $e^{tQ}\to 0$ as $t\to\infty$. In order to capture the block $e^{tB_j}$
	that decays at a slower rate, we note that there may exist multiple blocks associated with 
	$\vartheta$.  Let
	$$m:=\max\{m_j\,:\,\Re(\lambda_i)=\vartheta\}\geq 1\,, $$ 
	that is, $m$ is the largest among the $m_j$ associated with $\vartheta$, so that $t^{m-1}e^{\vartheta t}$  corresponds to the slowest decaying term appearing in $e^{tQ}$. Let  $t_\epsilon$  and  $\omega_\epsilon$ be as defined  in the statement of Theorem \ref{theorem2}.
	Then 
	$$\left(t_\epsilon + r w_\epsilon\right)^{m-1} e^{\vartheta(t_\epsilon + r w_\epsilon)}=\left(t_\epsilon + r w_\epsilon\right)^{m-1}\frac{\epsilon}{\left(\log\frac{1}{\epsilon}\right)^{m-1}}e^{-rw+\mathcal{O}(\epsilon)}\,,$$
	and since 
	$$\lim_{\epsilon\downarrow 0} \frac{\left(t_\epsilon + r w_\epsilon\right)^{m-1}}{\left(\log\frac{1}{\epsilon}\right)^{m-1}}=|\vartheta|^{1-m}\,,$$
	it follows that 
	$$\lim_{\epsilon\downarrow 0}\frac{\left(t_\epsilon + r w_\epsilon\right)^{m-1} e^{ \vartheta(t_\epsilon + r w_\epsilon)}}{\epsilon} =|\vartheta|^{1-m}e^{-rw}\,.$$
	If we have a block $e^{-tB_j}$ associated to $\vartheta$ and $m$ that has the real form (i), that is $\lambda_j=\vartheta$ and $m_j=m$, then
	$$\lim_{\epsilon\downarrow 0}\frac{ e^{ (t_\epsilon+rw_\epsilon) B_j}}{\epsilon}= e^{-rw}\frac{|\vartheta|^{1-m}}{(m-1)!} E^{(m)}_{1m}\,,$$
	where we recall that $E^{(m)}_{1m}$ is the $m\times m$ matrix with a 1 in the $(1,m)$-th entry and zeros elsewhere.
	
	If we have a block $e^{tB_j}$ associated to $\vartheta$ and $m$ that has the complex form (ii), that is $\alpha_j=\vartheta$ and $m_j=m$, then the respective rotation matrix (with $\sin\left(\beta_jt\right)$ and $\cos\left(\beta_j t\right)$) will stay bounded but will not converge, although one can see that we do have limits along subsequences. 
	
	Finally, all  blocks not associated with $\vartheta$ and $m$  converge to zero. This implies  that  the limit of $\epsilon^{-1}e^{ \left(t_\epsilon + r w_\epsilon\right) Q}$, as $\epsilon\downarrow 0$, exists  if and only if all  blocks associated with $\vartheta$ and $m$ are of the form (i),  assuming that there are $\ell\ge 1$ such blocks.  By appropriately rearranging, we can assume that all block associated with $\vartheta$ and $m$ appear first along the diagonal. Under this arrangement, the limit can be expressed as
	$$e^{-rw}\frac{|\vartheta|^{1-m}}{(m-1)!} P_1\mathbf{I}^{\ell}_mP_1^{-1}.$$   
	This completes the proof. \end{proof}

\begin{proof}[Proof of Theorem \ref{theorem2}] 
	
	We first prove part (i). Observe that
	$$ 
	\widetilde{\Xi}_t ^{-1}=  
	\begin{pmatrix} 
		\Sigma_0^{-1} & 0 \\ 
		0 & \rho_t^{-1} 
	\end{pmatrix}, $$
	since the block entries are covariance matrices themselves. Thus,  the matrix $\widetilde{\Xi}_t^{-1} \Xi_t$ has ones in the diagonal implying that  the term $\mathrm{tr}( \widetilde{\Xi}_t^{-1}\Xi_t )- 2d$ equals zero. Hence from the definition of the Kullback-Leibler divergence for Gaussian distributions, see \eqref{KLgaus}, we deduce
	\[
	D_{KL}\Big( \mathcal{L}^{(\epsilon)}_t  \Big| \mathcal{I}^{(\epsilon)}_t  \Big) = \frac{1}{2} \ln \left( \frac{\det  \widetilde{\Xi_t} }{\det \Xi_t } \right)=-\frac{1}{2}\ln\left( \frac{\det \Xi_t }{\det  \widetilde{\Xi_t} } \right)=-\frac{1}{2}\ln\left(\det ( \mathbf{I}_{2d}+ \epsilon \Lambda_t)\right)\,,
	\]
	where
	\[
	\begin{split}
		\epsilon \Lambda_t &= \left(\Xi_t - \widetilde{\Xi}_t\right)  \widetilde{\Xi}_t^{-1} = \begin{pmatrix} 0 & \Sigma_0\, e^{t Q^{T} }   \rho_t^{-1}  \\ e^{t Q }   & 0 \end{pmatrix}= \begin{pmatrix} 0 & \Sigma_0\, e^{t Q^{T} }  \epsilon^{-2}  f_t \left( \epsilon^{-1} e^{tQ}  \right)^{-1}  \\
			e^{tQ} &  0 \end{pmatrix}\, , 
	\end{split}
	\]
	and 
	$$ \mathbf{I}_{2d}+ \epsilon \Lambda_t= \begin{pmatrix}\mathbf{I}_{d} & \Sigma_0\, e^{t Q^{T} }  \epsilon^{-2}  f_t \left( \epsilon^{-1} e^{tQ}  \right)^{-1}  \\ e^{tQ}  & \mathbf{I}_{d} \end{pmatrix}\,.$$
	To deduce our goal, we use the identity 
	\begin{equation}\label{blockdet}
		\det\begin{pmatrix} \mathbf{I}_d & B_{12}\\ B_{21}& \mathbf{I}_d\end{pmatrix}=\det\left( \mathbf{I}_d-B_{21}B_{12}\right)=\det\left( \mathbf{I}_d-B_{12}B_{21}\right)
	\end{equation}
	for determinants of block matrices. In our setting,  
	$$B_{21}B_{12}=e^{tQ}\Sigma_0\, e^{t Q^{T} }  \epsilon^{-2}  f_t \left( \epsilon^{-1} e^{tQ}  \right)^{-1} = \left(\epsilon^{-1}e^{tQ}\right)\Sigma_0\, \left(\epsilon^{-1}e^{t Q^{T} }\right)  f_t \left( \epsilon^{-1} e^{tQ}  \right)^{-1}\,,$$
	which yields to 
	\begin{equation}\label{conv}
		\det\left(\mathbf{I}_{2d}+\epsilon \Lambda_t \right)=\det\left(\mathbf{I}_d- \left(\epsilon^{-1}e^{tQ}\right)\Sigma_0\, \left(\epsilon^{-1}e^{t Q^{T} }\right)  f_t \left( \epsilon^{-1} e^{tQ}  \right)^{-1}\right)\,.
	\end{equation}	
	
	Take $t=t_\epsilon+ rw_\epsilon$. In the complex case, that is when some leading eigenvalues of $Q$ are complex, we do not have convergence in the right-hand side of \eqref{conv}, however the convergence along a subsequence provided by Lemma \ref{Jordan} ii), and that $f_t(A)\to f(A):=A\Sigma_0A^T+\varsigma_{\infty}$, as $\epsilon\to 0$, allow us to conclude the window profile decorrelation. On the other hand, when the leading eigenvalue is real, by Lemma \ref{Jordan} i), the right-hand side of \eqref{conv} converges, as $\epsilon\to 0$,  to $$ \det\left( \mathbf{I}_d- e^{-2rw}\Gamma\Sigma_0\Gamma^Tf\left(e^{-rw}\Gamma\right)^{-1}\right)=\det\left( \mathbf{I}_d- e^{-2rw}\Gamma\Sigma_0\Gamma^T\left(e^{-2rw}\Gamma\Sigma_0\Gamma^T+ \varsigma_{\infty} \right)^{-1}\right)\,.$$
	
	Thus we obtain the following decorrelation profile for the Kullback-Leibler divergence,
	\[
	\begin{split}
		G_{KL}(r)&=-\frac{1}{2}\ln\left( \det\left( \mathbf{I}_d- e^{-2rw}\Gamma\Sigma_0\Gamma^T\left(e^{-2rw}\Gamma\Sigma_0\Gamma^T+ \varsigma_{\infty} \right)^{-1}\right)\right)\\
		&= -\frac{1}{2}\ln\left(\det\left( \mathbf{I}_d- \Gamma\Sigma_0\Gamma^T\left(\Gamma\Sigma_0\Gamma^T+ e^{2rw}\varsigma_{\infty} \right)^{-1}\right) \right)\,.
	\end{split}
	\]
	The first equality implies that  
	$$\lim_{r\to+\infty}G_{KL}(r)=-\frac{1}{2}\ln\left( \det\left( \mathbf{I}_d\right)\right)=0\,. $$
	
	To evaluate the limit as $r\to-\infty$, we use the second equality together with the Moore-Penrose pseudoinverse $M^+$ of a square matrix $M$, a concept that generalises the usual notion of the inverse of a square matrix, see for instance \cite{Mo,Pe}. The Moore-Penrose psudoinverse may be defined through limit
	\begin{equation}\label{moore-penrose}
		M^{+}:=\lim_{\delta\to 0}M^T\left(MM^T+\delta  \mathbf{I}_d\right)^{-1} =\lim_{\delta\to 0}\left(M^TM+\delta  \mathbf{I}_d\right)^{-1}M^T\,.
	\end{equation}
	For a self-contained introduction to the theory of Moore-Penrose pseudoinverses, we refer to \cite{BH}; see, in particular, Theorem 4.3 therein. Thus, if we denote $A=\Gamma \Sigma_0^{1/2}$ then  
	\[
	\begin{split}
		\Gamma\Sigma_0\Gamma^T\left(\Gamma\Sigma_0\Gamma^T+ e^{2rw}\varsigma_{\infty} \right)^{-1}&=AA^T\left(AA^T+ e^{2rw}\varsigma_{\infty} \right)^{-1} \\
		&= AA^T\left(\varsigma_\infty^{1/2}\left(\varsigma_\infty^{-1/2}AA^T\varsigma_\infty^{-1/2}+ e^{2rw}\mathbf{I}_d\right)\varsigma_\infty^{1/2} \right)^{-1}\\
		&= A\left[(\varsigma_\infty^{-1/2}A)^T\left((\varsigma_\infty^{-1/2}A)(\varsigma_\infty^{-1/2}A)^T+ e^{2rw}\mathbf{I}_d \right)^{-1}\right]\varsigma_\infty^{-1/2}\\
		&\to A\left(\varsigma_\infty^{-1/2}A\right)^+\varsigma_\infty^{-1/2}\,,\quad\mbox{ as $r\to-\infty$}\,.
	\end{split}
	\]
	Hence, $\det\left( \mathbf{I}_d- \Gamma\Sigma_0\Gamma^T\left(\Gamma\Sigma_0\Gamma^T+ e^{2rw} \varsigma_{\infty} \right)^{-1}\right)$ converges, as $r\to-\infty$,  to 
	\[
	\begin{split}
		\det\left( \mathbf{I}_d- A\left(\varsigma_\infty^{-1/2}A\right)^+\varsigma_\infty^{-1/2}\right)&=\det\left( \varsigma_\infty^{-1/2}\right)\det\left( \mathbf{I}_d- A\left(\varsigma_\infty^{-1/2}A\right)^+\varsigma_\infty^{-1/2}\right)\det\left( \varsigma_\infty^{1/2}\right)\\
		&= \det\left(\varsigma_\infty^{-1/2}\varsigma_\infty^{1/2}- \varsigma_\infty^{-1/2}A\left(\varsigma_\infty^{-1/2}A\right)^+\right)\\
		&=\det\left(\mathbf{I}_d- BB^+\right)\,,
	\end{split}
	\]
	where $B=\varsigma_\infty^{-1/2}A$. On one hand, $\mathbf{I}_d-BB^+$ is a projection (see Proposition 3.3 in \cite{BH}) and its determinant has to be $0$ or $1$. On the other hand, it is also a projection onto a proper subspace of $\mathbb{R}^d$, since ${\rm Ker}(\mathbf{I}_d-BB^+)={\rm Ran}(B)={\rm Ran}( \varsigma_\infty^{-1/2}\Gamma \Sigma_0^{1/2})$, which shows that it must be $0$. This concludes the proof that
	$$\lim_{r\to-\infty}G_{KL}(r)=+\infty\,.$$

	For the second part, and in analogy with the one-dimensional case,   we once again apply the mean-shift and scaling properties  of the total variation distance (see for instance Lemma A.1 parts (ii) and (iv) in \cite{BJ1})  to obtain
	\[
	\begin{split}
		||\mathcal{L}^{(\epsilon)}_{t}-\mathcal{I}^{(\epsilon)}_{t}||_{TV}&=|| \mathcal{N}(0, \widetilde{\Xi}_t^{-1/2}\Xi_t\widetilde{\Xi}_t^{-1/2})-\mathcal{N}(0, \mathbf{I}_{2d})||_{TV}.
	\end{split}
	\]
	We observe that the covariance matrix $\widetilde{\Xi}_t^{-1/2}\Xi_t\widetilde{\Xi}_t^{-1/2}$ satisfies
	$$\widetilde{\Xi}_t^{-1/2}\Xi_t\widetilde{\Xi}_t^{-1/2}=\begin{pmatrix}\mathbf{I}_{d} & \Sigma^{1/2}_0e^{ tQ^T}\rho_t^{-1/2} \\ \rho_t^{-1/2} e^{tQ}\Sigma_0^{1/2} & \mathbf{I}_{d} \end{pmatrix}\,.$$
	Moreover, we have
	$$ \rho_t^{-1/2} e^{tQ}=f_t\left(\epsilon^{-1}e^{tQ} \right)^{-1/2}\epsilon^{-1}e^{tQ} \,.$$
	Thus again from Lemma \ref{Jordan}, we obtain the window decorrelation in the complex case and the following decorrelation profile for the total variation distance in the real case:
	$$G_{TV}(r)= || \mathcal{N}(0, M(r))-\mathcal{N}(0, \mathbf{I}_{2d})||_{TV}\,,$$
	where 
	\[	 
	M(r)=\begin{pmatrix}
		\mathbf{I}_d &  \Sigma_0^{\frac{1}{2}} \Gamma^T  ( \Gamma \Sigma_0 \Gamma^T + e^{2r w} \varsigma_{\infty} )^{-\frac{1}{2}} 
		\\ ( \Gamma \Sigma_0 \Gamma^T + e^{2rw} \varsigma_{\infty} )^{-\frac{1}{2}} \Gamma \Sigma_0^{\frac{1}{2}} & \mathbf{I}_d  \end{pmatrix}\,.
	\]
	It is straightforward to check that 
	$$\lim_{r\to+\infty}G_{TV}(r)= || \mathcal{N}(0, \mathbf{I}_{2d})-\mathcal{N}(0, \mathbf{I}_{2d})||_{TV}=0\,.$$
	If we see $M(r)$ as a block matrix as in \eqref{blockdet}, then one can deduce that $B_{12}B_{21}$ equals
	\[
	\begin{split}
		\Sigma_0^{1/2}\Gamma^T\left(\Gamma\Sigma_0\Gamma^T+ e^{2rw}\varsigma_{\infty} \right)^{-1} \Gamma\Sigma_0^{1/2}&=A^T\left(AA^T+ e^{2rw}\varsigma_{\infty} \right)^{-1}A\\
		&=A^T\left(\varsigma_{\infty}^{1/2}\left(\varsigma_{\infty}^{-1/2}AA^T\varsigma_{\infty}^{-1/2}+ e^{2rw}\mathbf{I}_d\right)\varsigma_{\infty}^{1/2} \right)^{-1}A\\
		&=(\varsigma_{\infty}^{-1/2}A)^T\left(\varsigma_{\infty}^{-1/2}A(\varsigma_{\infty}^{-1/2}A)^T+ e^{2rw}\mathbf{I}_d\right)^{-1}\varsigma_{\infty}^{-1/2} A\\
		&=B^T\left(BB^T+e^{2rw}\mathbf{I}_d\right)^{-1}B\\
		&\to  B^+B\,,\quad\mbox{ as }r\to -\infty\,,
	\end{split}
	\]
	where once again the definition of the Moore-Penrose pseudoinverse \eqref{moore-penrose} is used. Hence, 
	$$\lim_{r\to-\infty}\det\left(\mathbf{I}_d-\Sigma_0^{1/2}\Gamma^T\left(\Gamma\Sigma_0\Gamma^T+ e^{2rw}\varsigma_{\infty} \right)^{-1} \Gamma\Sigma_0^{1/2} \right)= \det\left(\mathbf{I}_d-B^+B\right)=0\,,$$
	since $\mathbf{I}_d-B^+B$ is also a projection onto a proper subspace of $\mathbb{R}^d$. This implies that the support of $\mathcal{N}(0, M(r))$ is concentrating on a proper subspace of $\mathbb{R}^d$, which implies that it is getting singular with respect to the Lebesgue measure. Therefore, 
	$$\lim_{r\to-\infty}G_{TV}(r)= \lim_{r\to-\infty}|| \mathcal{N}(0, M(r))-\mathcal{N}(0, \mathbf{I}_{2d})||_{TV}=1\,.$$
	This completes the proof.
\end{proof}

\section*{Acknowledgments}
\noindent
This research was supported by several organizations. The research of JCP was supported by SECIHTI through the {\it Proyecto Ciencia de Frontera CF-2023-I-2566}. LPRP's work was funded by {\it DGAPA-UNAM PREI 2024} project. SIL received funding by {\it DGAPA-UNAM PAPIIT IN114425} project. JCP and SIL gratefully acknowledge the support and facilities provided by IM-UFRJ during the development of this work, which was also supported by {\it CNPq} through grant {\it Universal 403423/2023-6}. Similarly, SIL and LPRP  express their gratitude to CIMAT for the hospitality and resources made available throughout the course of this research. Finally, LPRP thanks UNAM by the support and amenities provided.

\end{document}